\DeclareSymbolFontAlphabet{\mathbb}{AMSb}
\DeclareSymbolFontAlphabet{\mathbbl}{bbold}
\newtheorem{thm}{Theorem}[section]
\newtheorem{thm2}{Theorem}
\newtheorem*{conj2}{Conjecture}
\newtheorem{conj}{Conjecture}
\newtheorem{prop}[thm]{Proposition}
\newtheorem{cor}[thm]{Corollary}
\newtheorem{lem}[thm]{Lemma}
\theoremstyle{definition}
\newtheorem{defi}[thm]{Definition}
\newcommand{\bb}[1]{\mathbb{#1}}
\newcommand{\cl}[1]{{\mathcal{#1}}}
\newcommand{\mfr}[1]{{\mathfrak{#1}}}
\newcommand{\mrm}[1]{{\mathrm{#1}}}
\newcommand{\ov}[1]{{\overline{#1}}}
\newcommand{\wtd}[1]{{\widetilde{#1}}}
\newcommand{\Zl}{{\mathbb{Z}_\ell}}
\newcommand{\Qp}{{\mathbb{Q}_p}}
\newcommand{\Qla}{{\overline{\mathbb{Q}}_\ell}}
\newcommand{\LG}{{{}^LG}}
\newcommand{\LP}{{{}^LP}}
\newcommand{\LU}{{{}^LU}}
\newcommand{\LM}{{{}^LM}}
\newcommand{\Gm}{{\mathbb{G}_m}}
\newcommand{\Ga}{{\mathbb{G}_a}}
\newcommand{\Al}{\mathbb{A}^1}
\newcommand{\invp}{{\tfrac{1}{p}}}
\newcommand{\st}{{\hspace{4pt} \mathrm{s.t.} \hspace{4pt}}}
\newcommand{\Bun}{{\operatorname{Bun}}}
\newcommand{\dtrg}{{\operatorname{dim.trg}}}
\newcommand{\Eis}{{\operatorname{Eis}}}
\newcommand{\CT}{{\operatorname{CT}}}
\newcommand{\lis}{{\operatorname{lis}}}
\newcommand{\nuc}{{\operatorname{nuc}}}
\newcommand{\un}{{\operatorname{un}}}
\newcommand{\qc}{{\operatorname{qc}}}
\newcommand{\et}{{\acute{e}t}}
\newcommand{\Fil}{{\operatorname{Fil}}}
\newcommand{\gr}{{\operatorname{gr}}}
\newcommand{\spec}{{\operatorname{spec}}}
\newcommand{\bas}{{\operatorname{bas}}}
\newcommand{\id}{{\operatorname{id}}}
\newcommand{\fib}{{\operatorname{fib}}}
\newcommand{\cofib}{{\operatorname{cofib}}}
\newcommand{\pr}{{\operatorname{pr}}}
\newcommand{\colim}{\operatorname{colim}}
\newcommand{\BZ}{{\operatorname{BZ}}}
\newcommand{\Hom}{{\operatorname{Hom}}}
\newcommand{\RHom}{{\operatorname{RHom}}}
\newcommand{\Ind}{{\operatorname{Ind}}}
\newcommand{\Par}{{\operatorname{Par}}}
\newcommand{\IndCoh}{{\operatorname{IndCoh}}}
\newcommand{\Nilp}{{\operatorname{Nilp}}}
\newcommand{\End}{{\operatorname{End}}}
\newcommand{\ev}{{\operatorname{ev}}}
\newcommand{\ssi}{{\operatorname{ss}}}
\newcommand{\cusp}{{\operatorname{cusp}}}
\newcommand{\Sing}{{\operatorname{Sing}}}
\newcommand{\Fr}{{\operatorname{Fr}}}
\newcommand{\SL}{{\operatorname{SL}}}
\newcommand{\FS}{{\operatorname{FS}}}
\newcommand{\Rep}{{\operatorname{Rep}}}
\newcommand{\GL}{{\operatorname{GL}}}
\newcommand{\GSp}{{\operatorname{GSp}}}
\newcommand{\std}{{\operatorname{std}}}
\newcommand{\Sht}{{\operatorname{Sht}}}
\newcommand{\sm}{{\operatorname{sm}}}
\newcommand{\Spa}{{\operatorname{Spa}}}
\newcommand{\Ad}{{\operatorname{Ad}}}
\newcommand{\Mant}{{\operatorname{Mant}}}
\title{Second adjointness and cuspidal supports at the categorical level}
\author{Yuta Takaya}
\address{Graduate School of Mathematical Sciences, The University of Tokyo, 3-8-1 Komaba,
Meguro-ku, Tokyo 153-8914, Japan}
\email{takaya@ms.u-tokyo.ac.jp}
\begin{document}
\begin{abstract}
	We prove the second adjointness in the setting of the categorical local Langlands correspondence. Moreover, we study the relation between Eisenstein series and  cuspidal supports and present a conjectural characterization of irreducible smooth representations with supercuspidal $L$-parameters regarding geometric constant terms. 
	The main technical ingredient is an induction principle for geometric Eisenstein series which allows us to reduce to the situations already treated in the literature.  
\end{abstract}

\maketitle
\tableofcontents

\section{Introduction}

In the remarkable work of \cite{FS21}, Fargues and Scholze established geometric frameworks for the local Langlands correspondence and proposed a categorical enhancement of the correspondence that can be regarded as the geometric Langlands correspondence over the Fargues--Fontaine curve. 

Let us briefly recall the setting of the categorical local Langlands correspondence. 
Let $E$ be a non-archimedean local field of residual characteristic $p$ and let $G$ be a quasi-split connected reductive group over $E$. 
Let $q$ be the cardinality of the residue field of $E$ and let $\Lambda$ be a $\Zl[\sqrt{q}]$-algebra which plays the role of a coefficient ring. 
The key player on the automorphic side is the moduli stack $\Bun_G$ of $G$-bundles over the Fargues--Fontaine curve and the key player on the spectral side is the moduli stack $\Par_\LG$ of $L$-parameters of $G$ over $\Lambda$-algebras. 
The categorical local Langlands correspondence expects the existence of a categorical equivalence
\[\cl{D}(\Bun_G,\Lambda)\cong \IndCoh_\Nilp(\Par_\LG)\]
determined by a choice of a Whittaker datum. 
Here, $\cl{D}(\Bun_G,\Lambda)$ is the derived category of suitable sheaves of $\Lambda$-modules on $\Bun_G$ and $\IndCoh_\Nilp(\Par_\LG)$ is the derived category of ind-coherent sheaves on $\Par_\LG$ with nilpotent singular supports. 

As in the geometric Langlands correspondence, geometric Eisenstein series are expected to play the role of parabolic inductions at the categorical level. In the context of the categorical local Langlands correspondence, they were first introduced in \cite{Ham23} and studied along the lines of \cite{BG02}. 
In particular, geometric Eisenstein series are used to construct Hecke eigensheaves of sufficiently regular $L$-parameters factoring through maximal tori. 

In this paper, we study the second adjointness and cuspidal supports in the context of the categorical local Langlands correspondence. First, let us comment on the second adjointness. Let $P$ be a parabolic subgroup of $G$ with a Levi subgroup $M$. Let $\ov{P}$ be the parabolic subgroup of $G$ opposite to $P$. Then, we have the following diagram. 
\[
\begin{tikzcd}
    & \Bun_P \arrow[ld, "\mfr{p}"']  \arrow[rd, "\mfr{q}"] & \\
    \Bun_G & & \Bun_M \\
    & \Bun_\ov{P} \arrow[lu, "\ov{\mfr{p}}"']  \arrow[ru, "\ov{\mfr{q}}"] & 
\end{tikzcd}
\]
The unnormalized geometric Eisenstein series of $P$ is denoted by $\Eis^\un_P=\mfr{p}_!\mfr{q}^*$ and the unnormalized geometric constant term of $\ov{P}$ is denoted by $\CT^\un_\ov{P}=\ov{\mfr{q}}_!\ov{\mfr{p}}^*$. Here, we need a six-functor formalism of sheaves of $\Lambda$-modules on $\Bun_G$ to define these functors and at this moment this definition makes sense only when we work with torsion \'{e}tale sheaves. Our form of the second adjointness is as follows. 

\begin{thm2}\textup{(\Cref{thm:unEis})}\label{thm2:secadj}
	For a torsion $\bb{Z}[\invp]$-algebra $\Lambda$, $\Eis_P^\un$ is left adjoint to $\CT_\ov{P}^\un$ and preserves compact objects. 
\end{thm2}

Note that this second adjointness implies the compatibility of geometric Eisenstein series with the Bernstein--Zelevinsky involution as noted in \cite[Exercise 1.5.4]{Han24}. 

Since $\ov{\mfr{q}}$ is $\ell$-cohomologically smooth, the definition of $\CT_\ov{P}^\un$ makes sense even if we work with lisse-\'{e}tale sheaves and it preserves colimits by definition. We show in \Cref{prop:adjlis} that $\CT_\ov{P}^\un$ also preserves limits and admits a left adjoint. When we work with lisse-\'{e}tale sheaves, we define geometric Eisenstein series $\Eis_P^\un$ just as the left adjoint of $\CT_\ov{P}^\un$. In certain cases, we can verify their descriptions explicitly as in \Cref{lem:explicit}. 

Our proof of \Cref{thm2:secadj} is different from the proof of \cite{DG16} in the geometric setting. We explain the difference in more detail in \Cref{ssc:DG}. Our proof is based on the decomposition of the correspondence along $\Bun_P$ and uses the induction principle established in \Cref{ssc:ind}. This reduces the proof of \Cref{thm2:secadj} to the second adjointness established in \cite{DHKM24} and hyperbolic localization for diamonds established in \cite{FS21}. In particular, our proof of the second adjointness relies on the representation-theoretic counterpart. 
After releasing a preprint of this paper, Linus Hamann informed the author that he, David Hansen, and Peter Scholze are also working on a paper with an almost identical argument on second adjointness.

In the second part of this paper, we study cuspidal supports in the categorical setting when $\Lambda=\Qla$. 
Let $W_E$ be the Weil group of $E$. For every semisimple $L$-parameter $\varphi\colon W_E\to \LG(\Qla)$, there is a unique (up to conjugacy) Levi subgroup $M$ of $G$ such that $\varphi$ is $\widehat{G}$-conjugate to a supercuspidal $L$-parameter of $M$. We call this conjugacy class of $M$, denoted by $[M]$, the cuspidal support of $\varphi$. This defines a decomposition of the coarse moduli space of $\Par_\LG$ into closed and open subspaces and thus we have a decomposition $\Par_\LG=\coprod_{[M]} \Par_{\LG,[M]}$. Each $\Par_{\LG,[M]}$ classifies those $L$-parameters of $G$ such that the cuspidal supports of their semisimplications are $[M]$. The spectral action induces a corresponding decomposition $\cl{D}_\lis(\Bun_G,\Qla)=\bigoplus_{[M]}\cl{D}_\lis(\Bun_G,\Qla)_{[M]}$ on the automorphic side. 
We study the relation between $\cl{D}_\lis(\Bun_G,\Qla)_{[M]}$ and $\cl{D}_\lis(\Bun_M,\Qla)_{[M]}$ through geometric Eisenstein series. 
For this, it is easier to work with normalized geometric Eisenstein series. 
Let $K_{\Bun_P}$ be the dualizing complex of $\Bun_P$. It is explicitly calculated in \cite{HI24} and since it can be defined over $\Bun_M$, we regard it as a lisse-\'{e}tale sheaf over $\Bun_M$. Choose a square root of $q$ in $\Qla$. Then, it defines a canonical square root $K_{\Bun_P}^{1/2}$ and we can define the normalized geometric Eisenstein series as $\Eis_P=\Eis^\un_P((-)\otimes K_{\Bun_P}^{1/2})$. 
In \cite[Exercise 1.5.5]{Han24}, it is expected to commute with the action of the spectral Bernstein center. In this paper, we verify a bit weaker form of this expectation as follows. 

Let $\cl{Z}^\spec(G,\Lambda)$ be the ring of global sections on $\Par_\LG$. This is what we call the spectral Bernstein center of $G$. 
For an element $f\in \cl{Z}^\spec(G,\Lambda)$, its restriction to $\cl{Z}^\spec(M,\Lambda)$ is denoted by $f^M$. 

\begin{thm2}\textup{(\Cref{thm:compat})}\label{thm2:compat}
	For every $A\in \cl{D}_\lis(\Bun_M,\Lambda)$ with a quasicompact support, there is a finite filtration $\Fil^\bullet\Eis_P(A)$ of $\Eis_P(A)$ with an action of $\cl{Z}^\spec(M,\Lambda)$ extending the spectral action on $\Eis_P(A)$ such that for every $f\in \cl{Z}^\spec(G,\Lambda)$, the action of $f^M$ on each subquotient $\gr^{\bullet}(\Eis_P(A))$ is equal to the spectral action of $f$.  
\end{thm2}

This proof is also based on the induction principle established in \Cref{ssc:ind} and ultimately reduces to \cite[Theorem IX.7.2]{FS21} and \cite[Corollary IX.7.3]{FS21}. We can only prove the commutativity up to finite filtrations in this manner because we need to take excision sequences through induction steps. 

As a corollary of \Cref{thm2:compat}, it follows that idempotents in the spectral Bernstein center commute with normalized geometric Eisenstein series. Thus, we can show that $\Eis_P$ sends $\cl{D}_\lis(\Bun_M,\Qla)_{[M]}$ to $\cl{D}_\lis(\Bun_G,\Qla)_{[M]}$. In \Cref{ssc:specEis}, we study its spectral counterpart and show that $\Eis^\spec_{P}$ sends $\IndCoh(\Par_{\LM,[M]})$ to $\IndCoh(\Par_{\LG,[M]})$ and moreover the essential images of $\Eis^\spec_P$ for all $P$ generate the target category by following the argument in \cite{AG15}. Thus, it is expected that the same holds on the automorphic side and this expectation can be phrased in a different form as follows. 

Let $\cl{D}_\lis(\Bun_G,\Qla)_\cusp$ be the subcategory of $\cl{D}_\lis(\Bun_G,\Qla)$ consisting of the objects $A$ such that $\CT_P(A)=0$ for every proper parabolic subgroup $P$ of $G$. Here, $\CT_P$ is the normalized geometric constant term functor. It is easy to see from \Cref{thm2:compat} that $\cl{D}_\lis(\Bun_G,\Qla)_{[G]}\subset \cl{D}_\lis(\Bun_G,\Qla)_\cusp$. Now, we can see that the converse inclusion is equivalent to the essential surjectivity of $\Eis_P$ from $\cl{D}_\lis(\Bun_M,\Qla)_{[M]}$ to $\cl{D}_\lis(\Bun_G,\Qla)_{[M]}$ for all $P$. Thus, we derive the following conjecture.  
For $b\in B(G)_\bas$, let $G_b$ be the extended pure inner form of $G$ corresponding to $b$ and $i^b\colon \Bun_G^b=[\ast/G_b(E)]\hookrightarrow \Bun_G$ be the open immersion. 

\begin{conj2}\textup{(\Cref{conj:cusp})}\label{conj2:cusp}
	The inclusion $\cl{D}_\lis(\Bun_G,\Qla)_{[G]} \subset \cl{D}_\lis(\Bun_G,\Qla)_\cusp$ is an equality. In particular, for any smooth irreducible $\Qla$-representation $\pi$ of $G_b(E)$ with $b\in B(G)_\bas$, its Fargues--Scholze parameter $\varphi^\FS_\pi$ is supercuspidal if and only if $\CT_P(i^b_{!}\pi)=0$ for every proper parabolic subgroup $P$ of $G$. 
\end{conj2}

This characterization of irreducible smooth representations in supercuspidal $L$-packets is close in spirit to the representation-theoretic characterization of supercuspidal $L$-packets in \cite[\S 3.5]{DR09}. 
In \Cref{sssc:GLn} and \Cref{sssc:GSp4}, we study the cases where $G=\GL_n, \GSp_4$. 
What we do there to supercuspidal representations $\pi$ of $G_b(E)$ with non-supercuspidal $L$-parameters is to apply a suitable Hecke operator $T_V$ to $i^b_!\pi$ and extract an irreducible non-supercuspidal constituent from $T_Vi_!^b \pi$. Considering the expectation that Hecke operators should enumerate every member of the $L$-packet containing $\pi$, we can see the connection to the classical characterization in \cite[\S 3.5]{DR09} which expects that non-supercuspidal $L$-packets should contain non-supercuspidal representations. 

\subsection{The structure of the paper}
In \Cref{sec:secadj}, we describe our induction principle and deduce the second adjointness. In \Cref{sec:spact}, we first study spectral Eisenstein series and then study the relation between cuspidal supports and geometric Eisenstein series. 

\subsection*{Acknowledgements}
I would like to thank my advisor Yoichi Mieda for his constant support and encouragement. I am also grateful to Linus Hamann for informing me of their work on geometric Eisenstein series and explaining the difference between isocrystal and Harder--Narasimhan slopes of $\sigma$-conjugacy classes. 
This work was supported by the WINGS-FMSP program at the Graduate School of Mathematical Sciences, the University of Tokyo and JSPS KAKENHI Grant number JP24KJ0865.

\subsection*{Notation}

All rings are assumed to be commutative. 
We fix a prime $p$ and another prime $\ell \neq p$.
Let $E$ be a non-archimedean local field of residual characteristic $p$ with the ring of integers $O_E$. 
Let $k$ be the residue field of $O_E$ and $q$ be the cardinality of $k$. 
Frobenius maps relative to $k$ (taking the $q$-th power) are denoted by $\sigma$.
The absolute Galois group of $E$ is denoted by $\Gamma$. 
The $v$-stacks are considered as stacks over the category of perfectoid spaces over $k$ with the $v$-topology. 

\section{Second adjointness}\label{sec:secadj}
\subsection{Moduli stacks of bundles}\label{ssc:BunG}
In this section, we set up our notation used throughout this paper and review the properties of the moduli stacks of bundles over the Fargues--Fontaine curves established in \cite{FS21}. 

For an algebraic group $H$ over $E$, the moduli $v$-stack of $H$-bundles over the Fargues--Fontaine curve is denoted by $\Bun_H$. Now, we suppose that $H$ is connected and reductive. Then, $\Bun_H$ is an Artin $v$-stack of dimension $0$ and its underlying space $\lvert \Bun_H \rvert$ is homeomorphic to the set of $\sigma$-conjugacy classes $B(H)$ with the partial order given by Newton points and Kottwitz maps. The semistable locus $\Bun_H^\ssi$ is an open substack of $\Bun_H$ and the corresponding open subset of $B(H)$ consists of basic $\sigma$-conjugacy classes $B(H)_\bas$. For each $b\in B(H)$, the corresponding locally closed substack of $\Bun_H$ is denoted by $\Bun_H^b$ and the open substack of $\Bun_H$ consisting of $\sigma$-conjugacy classes under $b$ with respect to the partial order is denoted by $\Bun_H^{\preceq b}$. The locally closed immersion $\Bun_H^b\hookrightarrow \Bun_H$ is denoted by $i^b$. The Newton point of $b$ is denoted by $\nu_b$. 

Let $G$ be a quasi-split connected reductive group over $E$. Let $\Lambda$ be a coefficient ring where $p$ is invertible. 
The key player on the automorphic side is the derived category of sheaves of $\Lambda$-modules on $\Bun_G$. 
When $\Lambda$ is torsion, we work with the derived category $\cl{D}_\et(\Bun_G,\Lambda)$ of \'{e}tale sheaves of $\Lambda$-modules on $\Bun_G$ and when $\Lambda$ is a $\Zl$-algebra, we work with the derived category $\cl{D}_\lis(\Bun_G,\Lambda)$ of lisse-\'{e}tale sheaves of $\Lambda$-modules on $\Bun_G$.

Let $P$ be a parabolic subgroup of $G$ with a Levi subgroup $M$. The geometric Eisenstein series is the functor associated with the following correspondence. 
\[
\begin{tikzcd}
    & \Bun_P \arrow[ld, "\mfr{p}"']  \arrow[rd, "\mfr{q}"] & \\
    \Bun_G & & \Bun_M
\end{tikzcd}
\]
Here, $\mfr{p}$ is representable in locally spatial diamond, compactifiable and locally $\dtrg<\infty$ and $\mfr{q}$ is $\ell$-cohomologically smooth (see \cite[Lemma 4.1]{ALB21}). We recall geometric Eisenstein series more precisely in \Cref{ssc:EisCT}. 
Here, we note that this correspondence is associative in the sense that for a parabolic subgroup $P'\supset P$ of $G$ with a Levi subgroup $M'\supset M$, the diagram
\[
	\begin{tikzcd}
		& & \Bun_P \arrow[ld]  \arrow[rd] & & \\
		& \Bun_{P'} \arrow[ld]  \arrow[rd] & & \Bun_{M'\cap P} \arrow[ld] \arrow[rd]& \\
		\Bun_G & & \Bun_{M'} & & \Bun_M
	\end{tikzcd}
\]
describes the composition of the correspondences for $P'$ and $M'\cap P$. 

In \Cref{ssc:ind}, we will ``decompose'' the above correspondence into certain basic correspondences. Here, we describe the constituents of this decomposition. 
Let $b\in B(M)_\bas$. The preimage $\mfr{q}^{-1}(\Bun_M^b)$ is denoted by $\Bun_P^b$. The constituents of our decomposition are the correspondences from $\Bun_M^b$ to $\Bun_G$ along $\Bun_P^b$ which satisfy one of the following conditions. 
\begin{enumerate}
	\item $\nu_b$ is strictly $P$-codominant, 
	\item $\nu_b$ is central in $G$, or equivalently $b$ is basic in $G$, or
	\item $\nu_b$ is strictly $P$-dominant. 
\end{enumerate}
Here, $\nu_b$ is said to be strictly $P$-dominant (resp.\ strictly $P$-codominant) if for every positive root $\alpha$ in $P$ outside $M$, we have $\langle \alpha, \nu_b \rangle >0$ (resp.\ $\langle \alpha, \nu_b \rangle <0$). If $P$ is a maximal parabolic subgroup of $G$, then there is only one simple root outside $M$, so every $b\in B(G)_\bas$ is classified into one of the three cases. 

In case (1), we have $\Bun_P^b=\Bun_G^b$ and $\mfr{p}\vert_{\Bun_P^b}=i^b$. In case (2), the associated correspondence goes from $[\ast/M_b(E)]$ to $[\ast/G_b(E)]$ and it is equal to the representation-theoretic correspondence of the parabolic induction along $P_b(E)$. Only the correspondence in case (3) is something new and what we come across here is the local chart $\cl{M}_b$ introduced in \cite[Section V.3]{FS21}. In this case, we have $\Bun_P^b=\cl{M}_b$ and $\mfr{p}\vert_{\cl{M}^b}$ is $\ell$-cohomologically smooth. In all of these cases, the correspondences factor through the open substack $\Bun_G^{\preceq b}$.

\subsection{An induction principle}\label{ssc:ind}
In this section, we give an induction principle for geometric Eisenstein series and constant terms by decomposing the correspondence along $\Bun_P$. 
This decomposition is obtained through successive combinations of excision and factorization into two correspondences. 
The following proposition allows us to reduce a proof of certain properties of parabolic inductions to the basic cases introduced in \Cref{ssc:BunG}. 

\begin{prop} \label{prop:ind}
	Let $\Psi_{(G,P,b)}$ be a property of a parabolic subgroup $P$ of a quasi-split connected reductive group $G$ over $E$ and a $\sigma$-conjugacy class $b\in B(M)$ of a Levi subgroup $M$ of $P$.
    Suppose that $\Psi_{(G,P,b)}$ holds if one of the following conditions holds. 

    \begin{enumerate}
        \item $b$ is basic and $\nu_b$ is strictly $P$-codominant. 
        \item $b$ is basic and $P$ is a maximal proper parabolic subgroup of $G$. 
        \item $\Psi_{(G,P',b')}$ holds for every pair $(P',b')$ such that $P'\subset P$, $b'$ is basic, $\nu_{b'}$ is strictly $(M\cap P')$-codominant and
        $b'$ is a generalization of $b$ in $B(M)$. 
        \item Both $\Psi_{(G,P',b)}$ and $\Psi_{(M',M'\cap P,b)}$ hold for some parabolic subgroup $P'\supset P$ with a Levi subgroup $M'\supset M$.  
    \end{enumerate}

    Then, $\Psi_{(G,P,b)}$ holds for every triple $(G,P,b)$. 
\end{prop}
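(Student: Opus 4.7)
The plan is to establish $\Psi_{(G,P,b)}$ for every triple by a nested induction, with (1) and (2) serving as base cases and (3), (4) as formal reduction rules. The outer induction is on the semisimple rank of $G$; within it, I would first use (4) to reduce the parabolic to the cases $P = G$ or $P$ maximal proper, and then use (3) together with an inner induction on the partial order on $B(M)$ to reduce to basic $b$.

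The outer base case is $G$ a torus, where $P = G$ is forced, every $b$ is basic, and the codominance condition in (1) is vacuous, so (1) applies. For general $G$, if $P \subsetneq G$ is not maximal, choose a maximal proper $P' \supsetneq P$ with Levi $M' \supsetneq M$; condition (4) reduces $\Psi_{(G,P,b)}$ to $\Psi_{(G,P',b_{M'})}$ and $\Psi_{(M',M'\cap P,b)}$. The second has strictly smaller ambient group and is handled by the outer induction, and iterating the first reduces us to $P \in \{G\} \cup \{\text{maximal proper parabolics of } G\}$.

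For $P = G$, hypothesis (1) handles basic $b$ directly (vacuous codominance), and (3) handles non-basic $b$, because the resulting pairs $(G, P', b')$ automatically satisfy $M \cap P' = P'$ and thus fall under (1). For $P$ maximal proper with $b$ basic, (2) applies. The main case is $P$ maximal proper with $b$ non-basic. Applying (3), for each pair $(G, P', b')$ produced --- with Levi $L' \subseteq M$ of $P'$, basic $b'$, strict $(M\cap P')$-codominance of $\nu_{b'}$, and $b'$ generalizing $b$ in $B(M)$ --- I would argue as follows: if $L' = M$ then $P' = P$ and $b'$ is basic in $M$, handled by (2); if $L' \subsetneq M$, I apply (4) with intermediate $P$, reducing to $\Psi_{(M,M\cap P',b')}$ (smaller ambient group, handled by the outer induction) together with $\Psi_{(G,P,b'_M)}$ for the pushforward $b'_M \in B(M)$. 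A tertiary induction along the well-founded partial order on $B(M)$, whose minimal elements are the basic classes handled by (2), then closes the argument.

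The main obstacle I anticipate is making this tertiary induction well-founded: one must verify that under the conjunction of conditions in (3) the pushforward $b'_M$ is strictly below $b$ in the partial order on $B(M)$, so the recursion terminates. The delicate point is to exclude the canonical basic lift $b_0 \in B(L_b)$ of $b$ to the centralizer Levi $L_b = Z_M(\nu_b)$, for which $b'_M = b$ and the reduction to $\Psi_{(G,P,b'_M)}$ would be circular. This should follow from carefully combining the strict $(M\cap P')$-codominance condition with $L' \subsetneq M$, together with the interpretation of ``generalization'' that makes the stratification of $\Bun_P$ recalled in \Cref{ssc:BunG} compatible with the induction; in the residual case, a direct appeal to the local chart $\cl{M}_{b_0}$ description of the strictly $P$-dominant stratum should resolve the loop.
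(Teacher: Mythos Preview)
Your approach has a genuine gap, and it is precisely the one you flag. For non-basic $b\in B(M)$, condition (3) \emph{always} produces a pair $(P',b')$ with $[b']_M=b$: namely the codominant Harder--Narasimhan lift, with $M'=Z_M(w_0^M\nu_b)$, $\nu_{b'}=w_0^M\nu_b$, and $b'$ basic in $M'$. This $\nu_{b'}$ is $M$-antidominant, hence strictly $(M\cap P')$-codominant, so the pair genuinely appears in the list from (3). Since $b$ is non-basic, $M'\subsetneq M$, so you are in your Case~B; applying (4) with intermediate $P$ returns exactly $\Psi_{(G,P,[b']_M)}=\Psi_{(G,P,b)}$. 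Neither of your proposed fixes works: the codominance condition does not exclude this pair (it is the very pair that witnesses the stratum $\Bun_M^b$ inside $\Bun_M^{\preceq b}$), and an appeal to the local chart $\cl{M}_{b_0}$ is illegitimate here, since the proposition is a purely formal statement about an arbitrary property $\Psi$ satisfying (1)--(4).

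The paper's argument avoids this by organizing the induction differently. It does \emph{not} first reduce to $P$ maximal. Instead, after a single application of (3) to reduce to basic $b$, it looks for a simple root $\alpha$ of $P$ outside $M$ with $\langle\alpha,\nu_b\rangle\geq 0$ and uses (4) to enlarge $P$ by that single root to $P_\alpha$; the side term $\Psi_{(M_\alpha,M_\alpha\cap P,b)}$ is disposed of by (2) because $M_\alpha\cap P$ is maximal proper in $M_\alpha$. The point of this careful choice of $\alpha$ is that when one then applies (3) inside $M_\alpha$, every resulting $\nu_{b'}$ is $M_\alpha$-antidominant while $\nu_b$ is $M_\alpha$-dominant with $\langle\alpha,\nu_b\rangle>0$, forcing $\nu_{b'}<\nu_b$ strictly. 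Termination then follows from a lexicographic measure: the Newton point weakly decreases and stays in a bounded set, and when it is constant the semisimple rank of $M$ strictly increases. Your scheme of going all the way up to a maximal parabolic in one shot loses exactly this control over which root is being added, and with it the strict Newton-point decrease.
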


Before giving a proof, let us explain how to apply this proposition.
In practice, $\Psi_{(G,P,b)}$ indicates a property of the parabolic induction along $\Bun_P$ from the open substack $\Bun_M^{\preceq b}\subset \Bun_M$. 
Condition (3) means that $\Psi_{(G,P,b)}$ is excisive and condition (4) means that $\Psi_{(G,P,b)}$ is associative. 
Under these conditions, the proposition states that it is enough to check the property $\Psi_{(G,P,b)}$ for basic correspondences. 

\begin{proof}
    We fix a quasi-split connected reductive group $G$ and a Borel pair $B \supset T$. Here, we always assume that $P\supset B$ and $M\supset T$. For every $b\in B(M)$, there is a parabolic subgroup $P'\subset P$ such that $b=[b']$ for some $b'\in B(M')_\bas$ with $\nu_{b'}$ strictly $(M\cap P')$-codominant. Thus, by condition (3), it is enough to show $\Psi_{(G,P,b)}$ for every pair $(P,b)$ with $b\in B(M)_\bas$. 

    Let $(P,b)$ be a pair with $b\in B(M)_\bas$. If $\nu_b$ is not strictly $P$-codominant, there is a simple root $\alpha$ of $P$ outside $M$ such that $\langle \alpha,\nu_b \rangle \geq 0$. 
    Let $P_\alpha$ be the minimal parabolic subgroup containing $P$ whose Levi subgroup contains $\alpha$ as a root. Let $M_\alpha$ be the Levi subgroup of $P_\alpha$. 
	Condition (2) states that $\Psi_{(M_\alpha,M_\alpha\cap P,b)}$ holds, so it is enough to show $\Psi_{(G,P_\alpha,b)}$ by condition (4). 
    If $\langle \alpha,\nu_b \rangle=0$, then $b$ is basic in $M_\alpha$. 
    If $\langle \alpha, \nu_b \rangle >0$, then for every $b' \preceq b$ in $B(M_\alpha)$, we have $w_0^{M_\alpha}\nu_b < \nu_{b'} \leq \nu_b$. Here, $w_0^{M_\alpha}$ is the longest element in the Weyl group of $M_\alpha$. The second inequality follows from the description of the partial order on $B(M_\alpha)$ and the first one follows from \cite[Lemma 2.2]{RR96} since  $w_0^{M_\alpha}\nu_b$ is $M_\alpha$-codominant. 
    Thus, condition (3) reduces $\Psi_{(G,P,b)}$ to $\Psi_{(G,P',b')}$ for pairs $(P',b')$ with $w_0^{M_\alpha}\nu_b \leq \nu_{b'} <\nu_b$ since $\nu_{b'}$ is $(M_\alpha\cap P')$-codominant and the Newton point of $b'$ as an element of $B(M_\alpha)$ is equal to $w_0^{M_\alpha}\nu_{b'}$. 

	Consider repeating this reduction until all remaining pairs $(P,b)$ satisfy that $\nu_b$ is strictly $P$-codominant. In one step, the semisimple rank of $M$ increases or the Newton point of $b$ decreases. Moreover, the Newton point of $b$ does not increase and remains in a bounded subset. In particular, $\nu_b$ can decrease only finitely many times, so the above reduction stops after a finite number of repetitions. Thus, $\Psi_{(G,P,b)}$ follows from condition (1). 
\end{proof}

\subsection{Geometric Eisenstein series and geometric constant terms}\label{ssc:EisCT}
In this section, we establish the second adjointness via the induction principle established in \Cref{prop:ind}. 
First, we review the definition of geometric Eisenstein series and geometric constant term functors with torsion coefficients. We will use the following notation. 
\begin{defi}\label{def:etEis}
	Let $\Lambda$ be a torsion $\bb{Z}[\invp]$-algebra. The unnormalized geometric Eisenstein series functor, the unnormalized geometric constant term functor and their $\ast$-variants with coefficients in $\Lambda$ are defined as
	\begin{align*}
		\Eis^\un_P&=\mfr{p}_!\mfr{q}^*\colon \cl{D}_\et(\Bun_M,\Lambda)\to \cl{D}_\et(\Bun_G,\Lambda), \\
		\CT^\un_P&=\mfr{q}_!\mfr{p}^*\colon \cl{D}_\et(\Bun_G,\Lambda)\to \cl{D}_\et(\Bun_M,\Lambda), \\
		\Eis^\un_{P*}&=\mfr{p}_*\mfr{q}^!\colon \cl{D}_\et(\Bun_M,\Lambda)\to \cl{D}_\et(\Bun_G,\Lambda), \\
		\CT^\un_{P*}&=\mfr{q}_*\mfr{p}^!\colon \cl{D}_\et(\Bun_G,\Lambda)\to \cl{D}_\et(\Bun_M,\Lambda).
	\end{align*}
	In particular, $\CT^\un_{P*}$ is a right adjoint of $\Eis^\un_P$ and $\Eis^\un_{P*}$ is a right adjoint of $\CT^\un_P$. 
\end{defi}
As in \cite[Lemma 4.17]{HI24}, these functors are associative. Let $\ov{P}$ be the parabolic subgroup of $G$ opposite to   $P$ and consider the following diagram. 
\[
\begin{tikzcd}
    & \Bun_P \arrow[ld, "\mfr{p}"']  \arrow[rd, "\mfr{q}", shift left=0.5ex] & \\
    \Bun_G & & \Bun_M \arrow[lu, "\mfr{i}", shift left=0.5ex] \arrow[ld,"\ov{\mfr{i}}", shift left=0.5ex]\\
    & \Bun_\ov{P} \arrow[lu, "\ov{\mfr{p}}"']  \arrow[ru, "\ov{\mfr{q}}", shift left=0.5ex] & 
\end{tikzcd}
\]
We will show that $\Eis_P^\un$ is left adjoint to $\CT_{\ov{P}}^\un$. This is what we call the second adjointness at the categorical level. 
It has been observed that the second adjointness is highly related to hyperbolic localization (\cite{DG16}, \cite{Bra03}) and we will rely on hyperbolic localization for diamonds established in \cite[Section IV.6]{FS21}. However, our approach is different from \cite{DG16} and we will explain the difference in more detail in \Cref{ssc:DG}. 

First, we follow the usual construction in the theory of hyperbolic localization to get a natural transformation $\CT^\un_{\ov{P}*}\to \CT^\un_P$. 

\begin{lem}\label{lem:nattr}
	For a torsion $\bb{Z}[\invp]$-algebra $\Lambda$, there is a natural transformation $\CT^\un_{\ov{P}*}\to \CT^\un_{P}$. 
\end{lem}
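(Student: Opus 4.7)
The plan is to construct the natural transformation $\CT^\un_{\ov{P}*}\to \CT^\un_P$ as the composition of three intermediate natural transformations
\[
\ov{\mathfrak{q}}_*\ov{\mathfrak{p}}^! A \;\longrightarrow\; \ov{\mathfrak{i}}^*\ov{\mathfrak{p}}^! A \;\longrightarrow\; \mathfrak{i}^!\mathfrak{p}^* A \;\longrightarrow\; \mathfrak{q}_!\mathfrak{p}^* A,
\]
arising from adjunctions and base change in the six-functor formalism of \cite{FS21}. Morally this is the diamond incarnation of Braden's specialization natural transformation \cite{Bra03}.

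The outer arrows come formally from the retraction identities $\ov{\mathfrak{q}}\ov{\mathfrak{i}}=\id_{\Bun_M}$ and $\mathfrak{q}\mathfrak{i}=\id_{\Bun_M}$. The unit $\id\to \ov{\mathfrak{i}}_*\ov{\mathfrak{i}}^*$ composed with $\ov{\mathfrak{q}}_*\ov{\mathfrak{i}}_*=\id$ yields a natural transformation $\ov{\mathfrak{q}}_*\to \ov{\mathfrak{i}}^*$, from which the first arrow follows by evaluation at $\ov{\mathfrak{p}}^!A$; dually the counit $\mathfrak{i}_!\mathfrak{i}^!\to \id$ combined with $\mathfrak{q}_!\mathfrak{i}_!=\id$ produces $\mathfrak{i}^!\to \mathfrak{q}_!$, whence the third arrow after evaluation at $\mathfrak{p}^*A$.

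The middle arrow $\ov{\mathfrak{i}}^*\ov{\mathfrak{p}}^!\to \mathfrak{i}^!\mathfrak{p}^*$ carries the geometric content. By the adjunction $\mathfrak{i}_!\dashv \mathfrak{i}^!$, it suffices to produce a natural map $\mathfrak{i}_!\ov{\mathfrak{i}}^*\ov{\mathfrak{p}}^! A\to \mathfrak{p}^* A$. I would introduce the fiber product $Y := \Bun_P\times_{\Bun_G}\Bun_{\ov{P}}$ with its projections $\pi_P,\pi_{\ov{P}}$ and the canonical morphism $\sigma:\Bun_M\to Y$ induced by $(\mathfrak{i},\ov{\mathfrak{i}})$ (well-defined since $\mathfrak{p}\mathfrak{i}=\ov{\mathfrak{p}}\ov{\mathfrak{i}}$). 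The key geometric claim is that $\sigma$ is an open immersion, realizing $\Bun_M$ as the transverse open stratum in the double-coset Bruhat stratification of $Y$ indexed by $W_M\backslash W_G/W_M$. Granting this, $\sigma^!=\sigma^*$ and the counit $\sigma_!\sigma^*\to \id$ is available; combined with proper base change $\mathfrak{p}^*\ov{\mathfrak{p}}_!\cong (\pi_P)_!\pi_{\ov{P}}^*$ for the Cartesian square defining $Y$, we obtain a Beck--Chevalley transformation
\[
\mathfrak{i}_!\ov{\mathfrak{i}}^* \;=\; (\pi_P)_!\sigma_!\sigma^*\pi_{\ov{P}}^* \;\longrightarrow\; (\pi_P)_!\pi_{\ov{P}}^* \;\cong\; \mathfrak{p}^*\ov{\mathfrak{p}}_!.
\]
Evaluating at $\ov{\mathfrak{p}}^!A$ and composing with the counit $\ov{\mathfrak{p}}_!\ov{\mathfrak{p}}^!A\to A$ produces the required map $\mathfrak{i}_!\ov{\mathfrak{i}}^*\ov{\mathfrak{p}}^!A\to \mathfrak{p}^* A$, and hence the middle arrow by adjunction.

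The main obstacle is the geometric claim that $\sigma:\Bun_M\to Y$ is an open immersion of $v$-stacks, which amounts to a Bruhat-type analysis of the fiber product (exploiting $P\cap \ov{P}=M$ as group schemes) carried out within the framework of small $v$-stacks. Once this is in hand, all other manipulations are formal consequences of the six-functor formalism of \cite{FS21}.
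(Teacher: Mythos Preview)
Your proposal is correct and follows essentially the same route as the paper: both construct the outer arrows from the (co)units of the retractions $\mathfrak{q}\mathfrak{i}=\id$ and $\ov{\mathfrak{q}}\,\ov{\mathfrak{i}}=\id$, and both reduce the middle arrow to the geometric fact that $\Bun_M\to \Bun_P\times_{\Bun_G}\Bun_{\ov{P}}$ is an open immersion. The paper proves this open immersion via the big cell $P\ov{P}\subset G$ (working fiberwise with the spaces $\cl{M}_{\cl{E}/P}$ of \cite[IV.3]{FS21}), which is exactly the Bruhat-type argument you outline; your packaging of the middle arrow via base change for $(-)_!$ and the counit $\ov{\mathfrak{p}}_!\ov{\mathfrak{p}}^!\to\id$ is an equivalent mate of the paper's use of the exchange transformation $\pr_2^*\ov{\mathfrak{p}}^!\to \pr_1^!\mathfrak{p}^*$.
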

\begin{proof}
	Composing with the unit $\id\to \ov{\mfr{i}}_*\ov{\mfr{i}}^*$ and the counit $\mfr{i}_!\mfr{i}^!\to \id$, we obtain natural transformations $\CT^\un_{\ov{P}*}\to \ov{\mfr{i}}^*\ov{\mfr{p}}^!$ and $\mfr{i}^!\mfr{p}^*\to \CT^\un_P$, so it is enough to construct $\ov{\mfr{i}}^*\ov{\mfr{p}}^! \to \mfr{i}^!\mfr{p}^*$. 
	Let $\mfr{j}\colon \Bun_M\to \Bun_P\times_{\Bun_G} \Bun_\ov{P}$ be the map induced by $\mfr{i}$ and $\ov{\mfr{i}}$. Let $\pr_i$ ($i=1,2$) be the projection of $\Bun_P\times_{\Bun_G} \Bun_\ov{P}$ to the $i$-th component. 
	Since there is a natural transformation $\pr_2^*\ov{\mfr{p}}^!\to \pr_1^!\mfr{p}^*$, it is enough to show that $\mfr{j}$ is an open immersion. 

	Let $S$ be an affinoid perfectoid space over $k$ and $\cl{E}$ be a $G$-torsor over the schematic Fargues--Fontaine curve $X_S$. 
	Following the notation in \cite[IV.3]{FS21}, the fiber $\Bun_P\times_{\Bun_G} S$ over $\cl{E}$ is isomorphic to $\cl{M}_{\cl{E}/P}$ and the fiber of $\mfr{j}$ over $\cl{E}$ is induced by the morphism $\cl{E}/M\to \cl{E}/P\times \cl{E}/\ov{P}$. 
	Since $\cl{E}/M$ can be regarded as a big cell inside $\cl{E}/P\times \cl{E}/\ov{P}$, in a sense that it consists of pairs $(e_1,e_2)$ such that $e_1^{-1}e_2$ lies in a big cell $P\ov{P}\subset G$, the morphism $\cl{E}/M\to \cl{E}/P\times \cl{E}/\ov{P}$ is an open immersion. 
	It induces an open immersion $\cl{M}_{\cl{E}/M}\to \cl{M}_{\cl{E}/P\times \cl{E}/\ov{P}}$, so $\mfr{j}$ is an open immersion. 
\end{proof}

It is easy to see that this transformation is associative. 
We compare this map with representation-theoretic counterparts on the semistable locus. 

\begin{lem}\label{lem:repsec}
	Let $b\in B(M)$ be a basic $\sigma$-conjugacy class which is also basic in $G$. 
	For a torsion $\bb{Z}[\invp]$-algebra $\Lambda$, the natural transformation $\CT^\un_{\ov{P}*}\to \CT^\un_{P}$ is an isomorphism on $\Bun^b_M$.
\end{lem}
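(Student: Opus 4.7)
The plan is to restrict the natural transformation of \Cref{lem:nattr} to the basic stratum $\Bun_M^b$ and identify the restriction with the derived second adjointness comparison map for smooth representations of $G_b(E)$.

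Since $b$ is basic in $M$ and $\nu_b$ is central in $G$, case~(2) of the discussion in \Cref{ssc:BunG} identifies $\Bun_M^b \cong [\ast/M_b(E)]$, $\Bun_P^b := \mfr{q}^{-1}(\Bun_M^b) \cong [\ast/P_b(E)]$, and shows that $\mfr{p}|_{\Bun_P^b}$ factors through the open substack $\Bun_G^b \cong [\ast/G_b(E)]$ (open because $b$ is basic in $G$). Under these identifications, the restrictions of $\mfr{p}$ and $\mfr{q}$ are induced by $P_b \hookrightarrow G_b$ and $P_b \twoheadrightarrow M_b$, respectively; analogous identifications hold for $\ov{P}$.

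Using proper base change for $\mfr{q}_!$ along the cartesian square obtained by pulling back $\mfr{q}$ along $i^b$, together with the base change for $\ov{\mfr{q}}_*$ available from the $\ell$-cohomological smoothness of $\ov{\mfr{q}}$ (which identifies $\ov{\mfr{q}}^!$ with $\ov{\mfr{q}}^*$ up to an invertible twist), I reduce the restrictions to $\Bun_M^b$ of $\CT^\un_P(A)$ and $\CT^\un_{\ov{P}*}(A)$ to functors defined purely by the two representation-theoretic correspondences obtained from $P_b, \ov{P}_b \subset G_b$, applied to the restriction of $A$ to the open stratum $\Bun_G^b$ (on which $!$-pullback and $\ast$-pullback agree). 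Moreover, $\mfr{j}$ restricts on the basic stratum to the big-cell open immersion
\[
	[\ast/M_b(E)] \hookrightarrow [\ast/P_b(E)] \times_{[\ast/G_b(E)]} [\ast/\ov{P}_b(E)]
\]
induced by $M_b \hookrightarrow P_b\ov{P}_b \subset G_b$, and tracing through the construction then exhibits the restricted natural transformation as the standard comparison map between $\ov{\mfr{q}}_*\ov{\mfr{p}}^!$ and $\mfr{q}_!\mfr{p}^*$ attached to the opposite parabolics $P_b, \ov{P}_b \subset G_b$.

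The conclusion will follow from the derived second adjointness for smooth $\bb{Z}[\invp]$-linear representations of $p$-adic groups established in \cite{DHKM24}, which asserts that this comparison is an isomorphism. The main difficulty is the careful six-functor bookkeeping required to verify that the restricted transformation matches the classical one---in particular, that $\mfr{j}$ at the basic stratum corresponds to the big cell $P_b\ov{P}_b \subset G_b$ used in the representation-theoretic construction, and that the base change isomorphisms for $\ov{\mfr{q}}_*$ compose consistently with the construction in \Cref{lem:nattr}.
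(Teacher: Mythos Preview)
Your approach is essentially the same as the paper's: both restrict the correspondence to the basic stratum, identify the functors with their representation-theoretic counterparts (unnormalized parabolic induction and twisted Jacquet functor), invoke the derived second adjointness of \cite{DHKM24}, and then check that the transformation of \Cref{lem:nattr} agrees with the one supplied by \cite{DHKM24}. The paper carries out the last verification slightly more concretely, identifying the restricted transformation with the composite $\pi^{\ov{N}_b(E)}\to \pi\to \pi_{N_b(E)}$ and matching it against the canonical pairing used in \cite{DHKM24}; you should expect to make this step equally explicit when executing your plan.
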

\begin{proof}
	The correspondence over $\Bun_M^b$ is written as $[\ast/G_b(E)]\xleftarrow{\mfr{p}_b} [\ast/P_b(E)] \xrightarrow{\mfr{q}_b} [\ast/M_b(E)]$. 
	Let us give the representation-theoretic description of the derived functors appearing in this correspondence. 
	First, $\mfr{p}_b^*$ corresponds to the restriction of representations of $G_b(E)$ to $P_b(E)$ and $\mfr{q}_b^*$ corresponds to the inflation of representations of $M_b(E)$ to $P_b(E)$.
	Next, $\mfr{p}_{b!}$ corresponds to the induction of representations of $P_b(E)$ to $G_b(E)$. This is because ${(G_b/P_b)(E)}\to *$ is proper and the pushforward of locally constant sheaves over profinite sets is the set of global sections by \cite[Proposition 14.9]{Sch17}. 
	Finally, since the dualizing complex of $[\ast/P_b(E)]$ is the modulus character $\delta_{P_b}$, $\mfr{q}_{b!}$ corresponds to the $\delta_{P_b}^{-1}$-twisted $N_b(E)$-coinvariant with $N_b$ the unipotent radical of $P_b$. 
	Thus, $\Eis^\un_P\vert_{\Bun_M^b}=\mfr{p}_{b!}\mfr{q}_b^*$ is identified with the unnormalized parabolic induction $\sigma \mapsto \Ind^{G_b}_{P_b}(\sigma)$ and $\CT^\un_P\vert_{\Bun_M^b}=\mfr{q}_{b!}\mfr{p}_b^*$ is identified with the twisted Jacquet functor $\pi\mapsto (\pi\vert_{P_b} \otimes \delta_{P_b}^{-1})_{N_b}$.
	We see from \cite{DHKM24} that $\Eis^\un_\ov{P}\vert_{\Bun_M^b}$ is left adjoint to $\CT^\un_P\vert_{\Bun_M^b}$ and $\CT^\un_{\ov{P}*}$ is isomorphic to $\CT^\un_{P}$ on $\Bun^b_M$. 
	It remains to verify that this isomorphism is equal to the map constructed in \Cref{lem:nattr}.  

	For a smooth representation $\pi$ of $G_b(E)$, the transformation in \Cref{lem:nattr} is identified with the composition $\pi^{\ov{N}_b(E)}\to \pi\to \pi_{N_b(E)}$. 
	The second adjointness established in \cite{DHKM24} is deduced from a pairing between $\pi_{N_b(E)}$ and $\pi^\vee_{\ov{N}_b(E)}$.
	Since its restriction to $\pi^{\ov{N}_b(E)}$ is induced from the natural pairing between $\pi$ and $\pi^\vee$, we see that the transformation in \Cref{lem:nattr} matches the isomorphism coming from the second adjointness established in \cite{DHKM24}. 
\end{proof}

\begin{thm}\label{thm:unEis}
	For a torsion $\bb{Z}[\invp]$-algebra $\Lambda$, the natural transformation $\CT^\un_{\ov{P}*}\to \CT^\un_P$ is an isomorphism. 
	In particular, $\Eis^\un_P$ is left adjoint to $\CT^\un_{\ov{P}}$ and preserves compact objects. 
\end{thm}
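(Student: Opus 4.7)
Part (2) follows formally from part (1). The right adjoint of $\Eis^\un_P=\mfr{p}_!\mfr{q}^*$ is $\CT^\un_{P*}=\mfr{q}_*\mfr{p}^!$, and applying part (1) to the parabolic $\ov{P}$ (whose opposite is $P$) identifies $\CT^\un_{P*}$ with $\CT^\un_{\ov{P}}=\ov{\mfr{q}}_!\ov{\mfr{p}}^*$. Since both $\ov{\mfr{p}}^*$ and $\ov{\mfr{q}}_!$ preserve colimits in the six-functor formalism of \cite{FS21}, the functor $\CT^\un_{\ov{P}}$ preserves colimits, and therefore its left adjoint $\Eis^\un_P$ preserves compact objects.

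For part (1), the plan is to apply the induction principle \Cref{prop:ind} to the property
\[
\Psi_{(G,P,b)}\colon\ \text{the transformation of \Cref{lem:nattr} restricts to an isomorphism over the stratum }\Bun_M^b.
\]
The excision condition (3) is immediate since both source and target functors of \Cref{lem:nattr} commute with restriction to open substacks of $\Bun_M$, and being an isomorphism is checked stratum by stratum. The associativity condition (4) should follow from the associativity of geometric constant terms recalled in \Cref{ssc:EisCT}, combined with a direct check that the natural transformation of \Cref{lem:nattr} is compatible with composition of parabolics through the factorization of the big cell $\cl{E}/M\hookrightarrow\cl{E}/P\times\cl{E}/\ov{P}$ used in the proof of \Cref{lem:nattr}.

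It remains to treat the two base conditions. For condition (1) ($b$ basic with $\nu_b$ strictly $P$-codominant), \Cref{ssc:BunG} provides $\Bun_P^b=\Bun_G^b$ and $\Bun_{\ov{P}}^b=\cl{M}_b$, with $\ov{\mfr{p}}$ being $\ell$-cohomologically smooth on the local chart; this is precisely the geometric setting of diamond hyperbolic localization in \cite[Section IV.6]{FS21}, and the transformation of \Cref{lem:nattr} will be identified with the comparison map between $\ast$- and $!$-restrictions along the attracting and repelling loci, which is an isomorphism on the fixed-point stratum. Condition (2) ($P$ maximal proper, $b$ basic) introduces two further subcases beyond condition (1): when $\nu_b$ is central in $G$, the correspondence reduces to the representation-theoretic one and \Cref{lem:repsec} identifies the transformation with the second adjointness of \cite{DHKM24}; when $\nu_b$ is strictly $P$-dominant, the roles of $P$ and $\ov{P}$ are exchanged and the same hyperbolic localization argument applies with $\Bun_P^b=\cl{M}_b$, $\Bun_{\ov{P}}^b=\Bun_G^b$.

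The main technical obstacle lies in the dominant/codominant cases: one must match the abstract transformation of \Cref{lem:nattr}, constructed from unit and counit maps along the open immersion $\mfr{j}\colon\Bun_M\hookrightarrow\Bun_P\times_{\Bun_G}\Bun_{\ov{P}}$, with the hyperbolic localization isomorphism of \cite[Section IV.6]{FS21} produced by the natural $\Gm$-action on $\cl{M}_b$ coming from the central cocharacter of $M$ defining $\nu_b$, whose attracting and repelling loci correspond to the $P$- and $\ov{P}$-reductions. Once this matching is unwound, \Cref{prop:ind} assembles the base cases into the full theorem.
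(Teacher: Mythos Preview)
Your overall strategy matches the paper's: apply the induction principle of \Cref{prop:ind} and verify the base cases via hyperbolic localization from \cite{FS21} and the representation-theoretic second adjointness of \cite{DHKM24}. However, there are two genuine gaps in the formal setup.

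First, your choice of $\Psi_{(G,P,b)}$ as a property of the single stratum $\Bun_M^b$, rather than of the open substack $\Bun_M^{\preceq b}$ as in the paper, breaks condition~(4). Associativity expresses $\CT^\un_{\ov{P}*}\to\CT^\un_P$ as the composite of the transformations for $P'$ and for $M'\cap P$. To conclude that this composite is an isomorphism over $\Bun_M^b$ you need the transformation for $P'$ to be an isomorphism over the entire image of $\Bun_{M'\cap P}^b\to\Bun_{M'}$. That image is contained in $\Bun_{M'}^{\preceq [b]}$ (this is exactly the containment the paper invokes), but it is not confined to the single stratum $\Bun_{M'}^{[b]}$, so your hypothesis $\Psi_{(G,P',b)}$ is too weak to feed into the composition. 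With the paper's open-substack formulation this problem disappears.

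Second, your claim that condition~(3) is ``immediate'' because ``being an isomorphism is checked stratum by stratum'' misreads the condition. Condition~(3) relates $\Psi_{(G,P,b)}$ to $\Psi_{(G,P',b')}$ where $P'\subset P$ may be a \emph{strictly smaller} parabolic with a different Levi $M'$; the natural transformations for $P$ and for $P'$ are different maps between different functors. Identifying the restriction of the $P$-transformation to a stratum of $\Bun_M$ with the $P'$-transformation on $\Bun_{M'}^{b'}$ is precisely the non-trivial step. The paper carries this out by first applying the already-proven codominant case~(1) \emph{inside} $M$ to get $\CT^\un_{M\cap P'}\vert_{\Bun_{M'}^{b'}}=i^{b'*}\otimes K^{-1}_{\Bun_{M\cap P'}}$, and then using associativity of the transformation to identify the stalk of $\CT^\un_{\ov{P}*}\to\CT^\un_P$ at $b'$ with $\CT^\un_{\ov{P'}*}\vert_{\Bun_{M'}^{b'}}\to\CT^\un_{P'}\vert_{\Bun_{M'}^{b'}}$ up to a twist. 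Only after this identification does excision on $\Bun_M^{\preceq b}$ let one assemble the stratum-wise statements into $\Psi_{(G,P,b)}$.
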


\begin{proof}
	We apply \Cref{prop:ind} to the property $\Psi_{(G,P,b)}$ that the natural transformation $\CT^\un_{\ov{P}*}\to \CT^\un_P$ is an isomorphism on the open substack $\Bun_M^{\preceq b}\subset \Bun_M$. 
	Since the formation of $\CT^\un_P$ and $\CT^\un_{\ov{P}*}$ is associative and $\mfr{p}(\mfr{q}^{-1}(\lvert \Bun_M^{\preceq b} \rvert))$ is contained in $\lvert \Bun_G^{\preceq b} \rvert$ (see the proof of \cite[Proposition V.3.6]{FS21}), $\Psi_{(G,P,b)}$ is associative. 
	Next, for $b\in B(M)_\bas$, we show $\Psi_{(G,P,b)}$ in the following cases: (1) $\nu_b$ is strictly $P$-codominant, (2) $b$ is basic in $G$, and (3) $\nu_b$ is strictly $P$-dominant. 
	The case (2) is treated in \Cref{lem:repsec}. 
	To treat the cases (1) and (3), suppose that $\nu_b$ is strictly $P$-codominant and consider the following diagram. 
	\[
	\begin{tikzcd}
		& \Bun_G^b \arrow[ld, "\mfr{p}_b"']  \arrow[rd, "\mfr{q}_b", shift left=0.5ex] & \\
		\Bun_G & & \lbrack \ast/M_b(E) \rbrack \arrow[lu, "\mfr{i}_b", shift left=0.5ex] \arrow[ld,"\ov{\mfr{i}}_b", shift left=0.5ex]\\
		& \cl{M}_b \arrow[lu, "\ov{\mfr{p}}_b"']  \arrow[ru, "\ov{\mfr{q}}_b", shift left=0.5ex] & 
	\end{tikzcd}
	\]
	It is enough to show that the maps $\ov{\mfr{q}}_{b*}\ov{\mfr{p}}_b^!\to \mfr{q}_{b!}\mfr{p}_b^*$ and $\mfr{q}_{b*}\mfr{p}_b^!\to \ov{\mfr{q}}_{b!} \ov{\mfr{p}}_b^*$ are isomorphisms. 
	First, by \cite[Proposition V.2.2]{FS21}, $\mfr{i}_b^*$ and $\mfr{i}_b^!$ are equivalences with the inverses $\mfr{q}_b^*$ and $\mfr{q}_b^!$, so the maps $\mfr{q}_{b*}\to \mfr{i}_{b}^*$ and $\mfr{i}_b^!\to \mfr{q}_{b!}$ are isomorphisms. 
	Moreover, by \cite[Theorem IV.5.3]{FS21}, the maps $\ov{\mfr{q}}_{b*}\to \ov{\mfr{i}}_b^*$ and $\ov{\mfr{i}}_b^!\to \ov{\mfr{q}}_{b!}$ are also isomorphisms (see the proof of \cite[Proposition V.4.2]{FS21}). 
	Thus, it is enough to show that the maps $\ov{\mfr{i}}_b^*\ov{\mfr{p}}_b^!\to \mfr{i}_b^!\mfr{p}_b^*$ and $\mfr{i}_b^*\mfr{p}_b^!\to \ov{\mfr{i}}_b^!\ov{\mfr{p}}_b^*$ are isomorphisms. 
	Since $\ov{\mfr{p}}_b$ is $\ell$-cohomologically smooth, we have $\ov{\mfr{p}}_b^!\cong \ov{\mfr{p}}_b^*\otimes \ov{\mfr{p}}_b^!\Lambda$. Since $[\ast/M_b(E)]\cong \Bun_G^b\times_{\Bun_G} \cl{M}_b$, the same holds true for $\mfr{i}_b$. Thus, the claim follows from \cite[Proposition 23.12]{Sch17} and the projection formula. 

	The last thing to show is that $\Psi_{(G,P,b)}$ is excisive. For every point of $\Bun_M^{\preceq b}$, there is a parabolic subgroup $P'\subset P$ with a Levi subgroup $M'\subset M$ and $b'\in B(M')_\bas$ such that $\nu_{b'}$ is strictly $(M\cap P')$-codominant and $b'$ corresponds to the point of $\Bun_M^{\preceq b}$. We see from the case (1) that $\CT^\un_{M\cap \ov{P'},*}\vert_{\Bun_{M'}^{b'}}\to \CT^\un_{M\cap P'}\vert_{\Bun_{M'}^{b'}}$ is an isomorphism. Since $\CT^\un_{M\cap P'}\vert_{\Bun_{M'}^{b'}}=i^{b'*}\otimes K^{-1}_{\Bun_{M\cap P'}}$, the stalk of $\CT^\un_{\ov{P}*}\to \CT^\un_P$ at $b'$ is equal to $\CT^\un_{\ov{P'},*}\vert_{\Bun_{M'}^{b'}}\to \CT^\un_{P'}\vert_{\Bun_{M'}^{b'}}$ up to a twist. Since excision holds on $\Bun_M^{\preceq b}$ as in \cite[Proposition VII.7.3]{FS21}, $\Psi_{(G,P,b)}$ holds if $\Psi_{(G,P',b')}$ holds for all $(P',b')$. 
\end{proof}

The following corollary is speculated in \cite[Exercise 1.5.4]{Han24}. It will be used to show the compatibility of duality on the reducible part of the categorical correspondence. 

\begin{cor}
	For a torsion $\bb{Z}[\invp]$-algebra $\Lambda$, 
	there is a natural equivalence of functors $\Eis^\un_P\cong \bb{D}_\BZ\circ \Eis^\un_{\ov{P}} \circ \bb{D}_\BZ$ from $\cl{D}_\et(\Bun_M,\Lambda)^\omega$ to $\cl{D}_\et(\Bun_G,\Lambda)^\omega$. 
	Here, $\bb{D}_\BZ$ denotes the Bernstein--Zelevinsky involution. 
\end{cor}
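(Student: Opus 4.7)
The plan is to prove this corollary by checking the identification stratum-by-stratum using the induction principle of \Cref{prop:ind}, in parallel with the proof of \Cref{thm:unEis}. Recall that on compact objects, $\bb{D}_\BZ$ is a contravariant self-equivalence of $\cl{D}_\et(\Bun_H,\Lambda)^\omega$ for reductive $H$ over $E$, constructed so that on each Newton stratum $\Bun_H^b\cong [\ast/H_b(E)]$ it specializes to the classical smooth contragredient on the derived category of admissible $H_b(E)$-representations.

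The starting point is the basic central case. When $b\in B(M)_\bas$ is also basic in $G$, \Cref{lem:repsec} identifies $\Eis^\un_P|_{\Bun_M^b}$ and $\Eis^\un_{\ov{P}}|_{\Bun_M^b}$ with the unnormalized parabolic inductions $\Ind^{G_b}_{P_b}$ and $\Ind^{G_b}_{\ov{P}_b}$ respectively, while $\bb{D}_\BZ$ becomes the smooth contragredient $(-)^\vee$. On this stratum, the corollary reduces to the classical identity $(\Ind^{G_b}_{P_b}\sigma)^\vee\cong \Ind^{G_b}_{\ov{P}_b}(\sigma^\vee)$ for finite-length admissible representations, which is itself a formal consequence of Casselman's second adjointness, i.e., the representation-theoretic statement in \cite{DHKM24} that underlies the proof of \Cref{thm:unEis}.

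Once the identification is in hand on the basic central strata, I would apply \Cref{prop:ind} to the property $\Psi_{(G,P,b)}$ that the natural isomorphism $\bb{D}_\BZ\circ \Eis^\un_{\ov{P}}\circ \bb{D}_\BZ\cong \Eis^\un_P$ holds when restricted to $\Bun_M^{\preceq b}$. Associativity (condition (4)) follows from the associativity of Eisenstein series established in \cite[Lemma 4.17]{HI24} and from $\bb{D}_\BZ$ being compatible with the product correspondences appearing in \Cref{ssc:BunG}. Excision (condition (3)) is immediate from $\bb{D}_\BZ$ being compatible with open/closed decompositions into Newton strata. The strictly $P$- or $\ov{P}$-codominant basic cases reduce via hyperbolic localization on the local chart $\cl{M}_b$ of \cite[Section IV.6]{FS21} to the same mechanism that powers the proof of \Cref{thm:unEis}, using that hyperbolic localization is inherently a duality-compatible statement and therefore commutes with $\bb{D}_\BZ$ up to tracked twists.

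The main obstacle is the construction of the natural transformation $\bb{D}_\BZ\Eis^\un_{\ov{P}}\bb{D}_\BZ\to \Eis^\un_P$ canonically enough to be amenable to \Cref{prop:ind}. Concretely, this requires careful bookkeeping of the twists distinguishing $\bb{D}_\BZ$ from Verdier duality, which involve the dualizing complex $K_{\Bun_P}$ computed in \cite{HI24} and modulus characters $\delta_{P_b}$ appearing on basic strata (which is the source of the difference between $\Eis^\un_{\ov{P}*}=\ov{\mfr{p}}_*\ov{\mfr{q}}^!$, the natural output of Verdier-dualizing $\Eis^\un_{\ov{P}}$, and $\Eis^\un_{\ov{P}}=\ov{\mfr{p}}_!\ov{\mfr{q}}^*$). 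Once the normalization of $\bb{D}_\BZ$ is pinned down so that these twists cancel against the relative dualizing complex of $\mfr{p}$ versus $\ov{\mfr{p}}$, the argument runs in parallel with the proof of \Cref{thm:unEis}.
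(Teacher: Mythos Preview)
Your approach is workable in principle but takes an unnecessarily long detour, and the obstacle you yourself identify is a real gap. The paper's proof is a three-line formal deduction from \Cref{thm:unEis} using only the projection formula and the defining property of $\bb{D}_\BZ$, with no induction whatsoever.

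Concretely, the paper uses that $\bb{D}_\BZ$ represents the pairing $\pi_{\Bun_H!}(-\otimes-)$ on compact objects, so that $\Hom(\bb{D}_\BZ X,Y)\cong \pi_{\Bun_H!}(X\otimes Y)$. Applying this twice and moving through the correspondence by the projection formula gives
\[
\Hom(\bb{D}_\BZ\Eis^\un_P\bb{D}_\BZ A,\,B)\;\cong\;\pi_{\Bun_G!}(\mfr{p}_!\mfr{q}^*\bb{D}_\BZ A\otimes B)\;\cong\;\pi_{\Bun_M!}(\bb{D}_\BZ A\otimes \mfr{q}_!\mfr{p}^*B)\;\cong\;\Hom(A,\CT^\un_P B).
\]
Hence $\bb{D}_\BZ\circ\Eis^\un_P\circ\bb{D}_\BZ$ is left adjoint to $\CT^\un_P$. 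But \Cref{thm:unEis} (with $P$ and $\ov P$ swapped) says $\Eis^\un_{\ov P}$ is already left adjoint to $\CT^\un_P$, so uniqueness of adjoints gives the isomorphism. There is no need to re-run the induction principle, no need to construct the natural transformation by hand, and no twists to track: the second adjointness is used as a black box.

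Your proposal, by contrast, tries to reprove the corollary from scratch in parallel with the proof of \Cref{thm:unEis}, which means you must first build a global natural transformation $\bb{D}_\BZ\Eis^\un_{\ov P}\bb{D}_\BZ\to\Eis^\un_P$ before you can check it is an isomorphism stratum-by-stratum. You flag this as ``the main obstacle'' and then gesture at bookkeeping of dualizing complexes and modulus characters without actually carrying it out. That is the gap: the induction principle verifies that a given map is an isomorphism, it does not produce the map. The paper sidesteps this entirely by letting the adjunction supply the natural isomorphism for free.
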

\begin{proof}
	Thanks to \Cref{thm:unEis}, it is enough to construct a natural isomorphism $\Hom(\bb{D}_\BZ\circ \Eis^\un_P\circ \bb{D}_\BZ(A),B)\cong \Hom(A,\CT^\un_P(B))$ for $A \in \cl{D}_\et(\Bun_M,\Lambda)^\omega$ and $B\in \cl{D}_\et(\Bun_G,\Lambda)^\omega$. 
	It follows from that
	\begin{align*}
		\Hom(\bb{D}_\BZ\circ \Eis^\un_P\circ \bb{D}_\BZ(A),B) &\cong 
		\pi_{\Bun_G !}(\mfr{p}_{!}\mfr{q}^*\bb{D}_\BZ(A)\otimes B) \\
		&\cong \pi_{\Bun_P !}(\mfr{q}^*\bb{D}_\BZ(A)\otimes \mfr{p}^*B) \\
		&\cong \pi_{\Bun_M !}(\bb{D}_\BZ(A)\otimes \mfr{q}_{!}\mfr{p}^*B) \\
		&\cong \Hom(A,\CT^\un_P(B))
	\end{align*}
	where $\pi_{\Bun_{(-)}}$ denotes the structure map $\Bun_{(-)}\to \ast$. 
\end{proof}

Now, we move on to the case where $\Lambda$ is a $\Zl$-algebra and work with lisse-\'{e}tale sheaves. This $\ell$-adic coefficient system has a defect in that excision does not hold in general and a full six-functor formalism is not available. 
Nonetheless, we still have the following analogues in this coefficient system. 
We expect that the same strategy as in the torsion case should work in a more sophisticated $\ell$-adic coefficient system. 

Let $K_{\Bun_P}$ be the dualizing complex of $\Bun_P$ calculated in \cite{HI24}.
Though it a priori lives on $\Bun_P$, it can be defined over $\Bun_M$ and we regard $K_{\Bun_P}$ as a lisse-\'{e}tale sheaf on $\Bun_M$. 

\begin{defi}
	For a $\Zl$-algebra $\Lambda$, the unnormalized geometric constant term functor with coefficients in $\Lambda$ is defined as
	\[
		\CT^\un_P=\mfr{q}_\natural(\mfr{p}^*\otimes K_{\Bun_P}^{-1})\colon \cl{D}_\lis(\Bun_G,\Lambda)\to \cl{D}_\lis(\Bun_M,\Lambda). 
	\]
\end{defi}

\begin{prop}\label{prop:adjlis}
	For a $\Zl$-algebra $\Lambda$, $\CT^\un_P$ preserves small limits and admits a left adjoint. 
	The left adjoint of $\CT^\un_{\ov{P}}$ preserves compact objects and it is denoted by $\Eis^\un_P$. 
\end{prop}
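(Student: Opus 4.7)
The functor $\CT^\un_P=\mfr{q}_\natural(\mfr{p}^*\otimes K_{\Bun_P}^{-1})$ is a composition of three colimit-preserving operations: $\mfr{q}_\natural$ is left adjoint to $\mfr{q}^*$ by $\ell$-cohomological smoothness of $\mfr{q}$; the twist by the invertible complex $K_{\Bun_P}^{-1}$ is an equivalence; and $\mfr{p}^*$ is left adjoint to $\mfr{p}_*$. Hence $\CT^\un_P$ automatically preserves colimits, and the substantive content of the proposition is limit preservation---existence of a left adjoint will then follow from the presentable adjoint functor theorem (accessibility is free once all colimits are preserved). Granting this, the final assertion is formal: applying the result to $\ov P$ gives the left adjoint $\Eis^\un_P$ of $\CT^\un_{\ov P}$, and since $\CT^\un_{\ov P}$ preserves filtered colimits by the same reasoning, $\Eis^\un_P$ preserves compact objects.

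To prove limit preservation of $\CT^\un_P$, I would mirror the strategy of the proof of \Cref{thm:unEis} and apply the induction principle \Cref{prop:ind} to the property $\Psi_{(G,P,b)}$ that $\CT^\un_P$ post-composed with the restriction to the quasicompact open substack $\Bun_M^{\preceq b}$ preserves small limits. Since $\{\Bun_M^{\preceq b}\}_b$ exhausts $\Bun_M$ by quasicompact opens and restriction to open substacks preserves limits, verifying $\Psi$ on every triple recovers the global claim. The associativity condition (4) of \Cref{prop:ind} is inherited from the associativity of the Eisenstein correspondence exactly as in the proof of \Cref{thm:unEis}. For the basic cases, when $\nu_b$ is strictly $P$-codominant or strictly $P$-dominant, I would use the equivalences of \cite[Proposition~V.2.2]{FS21} together with hyperbolic localization \cite[Theorem~IV.5.3]{FS21} to identify the restriction of $\CT^\un_P$ to $\Bun_M^b$ with a composition of right-adjoint-type operations, namely $\ov{\mfr{q}}_{b*}\ov{\mfr{p}}_b^!$ or $\mfr{q}_{b*}\mfr{p}_b^!$, which preserves limits manifestly. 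When $b$ is basic in $G$, the restriction reduces to the twisted Jacquet functor, a right adjoint by the representation-theoretic second adjointness of \cite{DHKM24}.

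The main obstacle is the excisive condition (3) of \Cref{prop:ind}, since the lisse-\'etale formalism lacks general excision. The resolution, analogous to the final paragraph of the proof of \Cref{thm:unEis}, is that at a point $b'\in\Bun_M^{\preceq b}$ arising from a smaller parabolic $P'\subset P$ with $\nu_{b'}$ strictly $(M\cap P')$-codominant, the stalk of $\CT^\un_P$ agrees up to a twist by a factor of the dualizing complex $K_{\Bun_{M\cap P'}}$ with that of $\CT^\un_{P'}$ on $\Bun_{M'}^{b'}$. Stratifying $\Bun_M^{\preceq b}$ by Newton strata and invoking this identification together with the basic-case analysis, I would assemble a finite filtration of $\CT^\un_P|_{\Bun_M^{\preceq b}}$ whose subquotients each preserve limits, completing the inductive step; the requisite fibre sequences arise from the $\ell$-cohomological smoothness of the specific maps involved rather than from a full recollement, which is what makes this step subtle.
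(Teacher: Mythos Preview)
Your overall architecture matches the paper's: apply \Cref{prop:ind} to the property that $\CT^\un_P$ restricted over $\Bun_M^{\preceq b}$ preserves small limits, and deduce the left adjoint from the presentable adjoint functor theorem. The excision and associativity verifications are essentially as you describe, though the paper phrases excision more simply: since restriction to each stratum $\Bun_M^{b'}$ preserves small limits \cite[Proposition~VII.7.2]{FS21} and these restrictions are jointly conservative on $\Bun_M^{\preceq b}$, checking limit preservation stratum by stratum suffices.

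The gap is in your treatment of the strictly $P$-dominant basic case (the chart side $\Bun_P^b=\cl{M}_b$). You propose to invoke hyperbolic localization \cite[Theorem~IV.5.3]{FS21} to identify $\CT^\un_P\vert_{\Bun_M^b}$ with a composite of the form $\ov{\mfr{q}}_{b*}\ov{\mfr{p}}_b^!$ and conclude by saying this preserves limits ``manifestly.'' But \cite[Theorem~IV.5.3]{FS21} is stated and proved for torsion \'etale sheaves, not for lisse-\'etale sheaves; the identification $\CT^\un_{\ov{P}*}\cong\CT^\un_P$ you are implicitly using is exactly \Cref{thm:unEis}, which was only established in the torsion setting. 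In the lisse-\'etale formalism there is no a priori six-functor package making the hyperbolic localization comparison available, so this step is circular as written.

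The paper closes this gap by proving directly that $\CT^{\un,b}_{\ov{P}}=\ov{\mfr{q}}_{b\natural}(\ov{\mfr{p}}_b^*\otimes K_{\Bun_P}^{-1})$ is right adjoint to $i^b_!$. This takes real work: one base changes to a geometric point and then to compact open subgroups $K\subset M_b(E)$, identifies $\ov{\mfr{q}}_{b,C\natural}((-)\otimes K_{\Bun_P}^{-1})$ with the nuclear lower shriek $\ov{\mfr{q}}^\nuc_{b,C!}$ via \cite[Proposition~8.10]{Man22}, uses that excision \emph{does} hold on the chart $\cl{M}_{b,C}$ to express $\ov{\mfr{i}}_{b,C*}$ as a cofiber, and finally reduces the adjunction identity to the torsion case by an $\ell$-adic completeness argument \cite[Lemma~5.2(iv)]{Man22} on compact generators. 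This reduction-to-torsion through nuclear sheaves is the technical heart of the proof and is absent from your proposal.
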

\begin{proof}
	We apply \Cref{prop:ind} to the property $\Psi_{(G,P,b)}$ that $\CT^\un_P\vert_{\Bun_M^{\preceq b}}$ preserves small limits. 
	Since the restriction to each locally closed stratum $\Bun_G^b$ preserves small limits (\cite[Proposition VII.7.2]{FS21}), $\Psi_{(G,P,b)}$ satisfies excision. 
	Thus, as in the proof of \Cref{thm:unEis}, it is enough to show that $\mfr{q}_{b\natural}\mfr{p}_b^*$ and $\ov{\mfr{q}}_{b\natural} \ov{\mfr{p}}_b^*$ preserves small limits when $\nu_b$ is strictly $P$-codominant. 
	As in the proof of \cite[Proposition VII.7.1]{FS21}, $\mfr{q}_{b\natural}$ is an equivalence, so $\mfr{q}_{b\natural}\mfr{p}_b^*$ preserves small limits by \cite[Proposition VII.7.2]{FS21}. 
	Now, let $\CT_{\ov{P}}^{\un,b}=\ov{\mfr{q}}_{b\natural} (\ov{\mfr{p}}_b^*\otimes K_{\Bun_P}^{-1})$. We will show that $\CT_\ov{P}^{\un,b}$ is right adjoint to $i^b_!$. Here, $i^b_!$ is the functor of extension by zero at $b$ such that for every $A\in \cl{D}_\lis(\Bun_M^b,\Lambda)$, $i^b_!A$ is supported only at $b$ and $i^{b*}i^b_!A=A$. For safety, we fix a geometric point $\Spa(C,O_C)$ and change the base from $\ast$ to $\Spa(C,O_C)$. It does not matter by \cite[Proposition VII.7.3]{FS21}. We use the subscript $(-)_C$ to denote the base change to $\Spa(C,O_C)$. Moreover, for a compact open subgroup $K\subset M_b(E)$, the base change along $[\Spa(C,O_C)/K]\to [\Spa(C,O_C)/M_b(E)]$ is denoted by the subscript $(-)_{C,K}$. 

	Now, by \cite[Proposition 8.10]{Man22}, we may identify $\ov{\mfr{q}}_{b,C\natural}((-)\otimes K_{\Bun_P}^{-1})$ with the nuclear lower shriek functor $\ov{\mfr{q}}^\nuc_{b,C!}$.
	Then, we have $\CT_{\ov{P},C}^{\un,b}\circ i_{C!}^b=\ov{\mfr{q}}^\nuc_{b,C!}\ov{\mfr{p}}_{b,C}^*i_{C!}^b=\ov{\mfr{q}}^\nuc_{b,C!}\ov{\mfr{i}}_{b,C*}$. Here, excision holds on $\cl{M}_{b,C}$ with $\ov{\mfr{i}}_{b,C}$ and its complement, so we have $\ov{\mfr{i}}_{b,C*}=\cofib(j_!(\ov{\mfr{q}}_{b,C}\circ j)^*\to \ov{\mfr{q}}_{b,C}^*)$ with $j\colon \cl{M}_{b,C}^\circ = \cl{M}_{b,C}\setminus [\Spa(C,O_C)/M_b(E)]\hookrightarrow \cl{M}_{b,C}$ the complement of $\ov{\mfr{i}}_{b,C}$. Thus, $\ov{\mfr{q}}^\nuc_{b,C!}$ and $\ov{\mfr{i}}_{b,C*}$ are both preserved under base change along $[\Spa(C,O_C)/K]\to [\Spa(C,O_C)/M_b(E)]$. Now, we have $\ov{\mfr{q}}^\nuc_{b,C,K!}\ov{\mfr{i}}_{b,C,K*}=\ov{\mfr{q}}^\nuc_{b,C,K*}\ov{\mfr{i}}_{b,C,K*}=\ov{\mfr{q}}^\lis_{b,C,K*}\ov{\mfr{i}}_{b,C,K*}=\id$ for every $K$ since $[\Spa(C,O_C)/K]$ has a quasicompact open neighborhood in $\cl{M}_{b,C,K}$. Since this construction is natural in $K$, we see that $\CT_{\ov{P},C}^{\un,b}\circ i_{C!}^b=\ov{\mfr{q}}^\nuc_{b,C!}\ov{\mfr{i}}_{b,C*}\cong \id$. 

	Now, we obtain a map $\Hom(i^b_{C!}A,B)\to \Hom(A,\CT^{\un,b}_{\ov{P},C}B)$ for $A\in \cl{D}_\lis(\Bun_{M,C}^b,\Lambda)$ and $B\in \cl{D}_\lis(\Bun_{G,C},\Lambda)$. Since we have $\Hom(i^b_{C!}A,B)\cong \Hom(\ov{\mfr{i}}_{b,C*}A,\ov{\mfr{p}}_{b,C}^*B)$, we will show that $\Hom(\ov{\mfr{i}}_{b,C*}A,B)\cong \Hom(A,\ov{\mfr{q}}^\nuc_{b,C!}B)$ for $B\in \cl{D}_\lis(\cl{M}_{b,C},\Lambda)$. We will deduce this from $\Hom(\ov{\mfr{i}}_{b,C,K*}A,B) \cong \Hom(A,\ov{\mfr{q}}^\nuc_{b,C,K!}B)$ for every open pro-$p$ subgroup $K\subset M_b(E)$. Now, $\ov{\mfr{i}}_{b,C,K*}$ is left adjoint to $\fib(\ov{\mfr{q}}_{b,C,K*}\to (\ov{\mfr{q}}_{b,C,K}\circ j)_*j^*)$. Both $\ov{\mfr{q}}_{b,C,K*}$ and $(\ov{\mfr{q}}_{b,C,K}\circ j)_*$ preserve colimits by \cite[Proposition VII.7.2]{FS21} and \cite[Lemma VII.7.5]{FS21}, so $\ov{\mfr{i}}_{b,C,K*}$ preserves compact objects. Thus, we may assume that $A$ is compact and $\Lambda=\Zl$. Then, since $\ov{\mfr{i}}_{b,C,K*}A$ is compact and $\ov{\mfr{q}}^\nuc_{b,C,K!}$ preserves colimits, we may assume that $B=f_\natural \Zl$ for some quasicompact separated $\ell$-cohomologically smooth map $f\colon X\to \cl{M}_{b,C,K}$ factoring through a quasicompact open subspace of $\cl{M}_{b,C,K}$. By \cite[Lemma 5.2 (iv)]{Man22}, $g_\natural$ preserves $\ell$-adically complete objects if $g$ is quasicompact, separated and $\ell$-cohomologically smooth, so $f_\natural \Zl$ and $(\ov{\mfr{q}}_{b,C,K}\circ f)_\natural \Zl\otimes K_{\Bun_P}^{-1}$ are both $\ell$-adically complete. Thus, the claim can be reduced to the torsion coefficients and it follows from the fact that $\ov{\mfr{i}}_b^!\cong \ov{\mfr{q}}_{b!}$ in the torsion case. Finally, the above comparison is natural in $K$, so we can descend along $[\Spa(C,O_C)/K]\to [\Spa(C,O_C)/M_b(E)]$ to obtain $\Hom(\ov{\mfr{i}}_{b,C*}A,B)\cong \Hom(A,\ov{\mfr{q}}^\nuc_{b,C!}B)$. 
\end{proof}

We verify that this geometric Eisenstein series has the expected description in the following basic cases. 

\begin{lem}\label{lem:explicit}
	Let $b\in B(M)$ be a basic $\sigma$-conjugacy class and let $A\in \cl{D}_\lis(\Bun_M^b,\Lambda)$. 
	\begin{enumerate}
		\item If $\nu_b$ is strictly $P$-codominant, we have $\Eis^\un_P(A)\cong i^b_!A$. 
		\item If $b$ is basic in $G$, $\Eis^\un_P(A)$ lies in $\cl{D}_\lis(\Bun_G^b, \Lambda) \cong \cl{D}(G_b(E),\Lambda)$ and corresponds to the unnormalized parabolic induction $\Ind^{G_b}_{P_b}(A)$. 
		\item If $\nu_b$ is strictly $P$-dominant, we have $\Eis^\un_P(A)\cong \mfr{p}_{b\natural }(\mfr{q}_b^*A\otimes K_{\Bun_P}^{-1})$. 
	\end{enumerate}
\end{lem}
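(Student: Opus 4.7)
The strategy is to verify each formula by computing the left adjoint of $\CT^\un_{\ov{P}}$ on the stratum $\Bun_M^b$, using the explicit description of the correspondence from \Cref{ssc:BunG}. In cases (1) and (3) the roles of $P$ and $\ov{P}$ are interchanged, and each calculation parallels the local adjunction argument already carried out in the proof of \Cref{prop:adjlis}; case (2) reduces to the representation-theoretic second adjointness of \cite{DHKM24}.

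For case (1), $\nu_b$ is strictly $P$-codominant, so for $P$ we are in situation (1) of \Cref{ssc:BunG} (with $\mfr{p}_b = i^b$ and $\mfr{q}_b$ an equivalence) while for $\ov{P}$ we are in situation (3) (with $\Bun_{\ov{P}}^b = \cl{M}_b$ and $\ov{\mfr{p}}_b$ $\ell$-cohomologically smooth). The final paragraph of the proof of \Cref{prop:adjlis} establishes the adjunction $\Hom(\ov{\mfr{i}}_{b*}A, B') \cong \Hom(A, \ov{\mfr{q}}^\nuc_{b!}B')$ on the local chart $\cl{M}_b$. Specializing $B' = \ov{\mfr{p}}_b^* B$ and using base change along the open immersion $\Bun_M^b \hookrightarrow \Bun_M$ to identify $\ov{\mfr{q}}^\nuc_{b!}\ov{\mfr{p}}_b^* B$ with $\CT^\un_{\ov{P}}(B)|_{\Bun_M^b}$, the adjunction becomes $\Hom(i^b_! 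A, B) \cong \Hom(A, \CT^\un_{\ov{P}}(B))$, yielding $\Eis^\un_P(A) \cong i^b_! A$.

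For case (2), $\Bun_G^b \cong [\ast/G_b(E)]$ is an open component of $\Bun_G^\ssi$ and both $\mfr{p}_b$ and $\ov{\mfr{p}}_b$ factor through it. For any $B \in \cl{D}_\lis(\Bun_G, \Lambda)$ supported off $\Bun_G^b$, base change along $\ov{\mfr{p}}_b$ gives $\CT^\un_{\ov{P}}(B)|_{\Bun_M^b} = 0$, so $\Eis^\un_P(A)$ is supported on $\Bun_G^b$. On this stratum, the identification $\mfr{p}_{b!}\mfr{q}_b^* = \Ind^{G_b}_{P_b}$ from \Cref{lem:repsec}, together with the second adjointness of \cite{DHKM24}, identifies the left adjoint of $\CT^\un_{\ov{P}}|_{\Bun_M^b}$ with the unnormalized parabolic induction.

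Case (3) is dual to case (1): now $\mfr{p}_b$ is $\ell$-cohomologically smooth with $\Bun_P^b = \cl{M}_b$, and $\ov{\mfr{p}}_b = i^b$. Running the argument from the proof of \Cref{prop:adjlis} with $P$ and $\ov{P}$ swapped computes $\CT^\un_{\ov{P}}(B)|_{\Bun_M^b}$ explicitly as a twist of $i^{b*}B$, and a direct adjunction calculation using $\mfr{p}_{b\natural} \dashv \mfr{p}_b^*$ and the projection formula along the $\ell$-cohomologically smooth $\mfr{q}_b$ identifies the left adjoint of $\CT^\un_{\ov{P}}|_{\Bun_M^b}$ with $A \mapsto \mfr{p}_{b\natural}(\mfr{q}_b^* A \otimes K_{\Bun_P}^{-1})$. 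The main technical point is tracking the dualizing sheaves so that the twist by $K_{\Bun_P}^{-1}$ in the final formula matches the one coming from $\CT^\un_{\ov{P}}$ via $\ov{\mfr{q}}_\natural$ and $K_{\Bun_{\ov{P}}}^{-1}$; this matching is a formal calculation using the explicit description of $K_{\Bun_P}$ in \cite{HI24}.
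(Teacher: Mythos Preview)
Your proposal is correct and follows essentially the same approach as the paper. Cases (1) and (2) are handled identically: the paper simply refers back to the proof of \Cref{prop:adjlis} for (1) and to the argument of \Cref{lem:repsec} for (2). For case (3), the paper is terser than you are---it just notes that $\CT_{\ov{P}}^\un\vert_{\Bun_M^b}=i^{b*}\otimes K_{\Bun_{\ov{P}}}^{-1}$, invokes $K_{\Bun_{\ov{P}}}^{-1}=K_{\Bun_P}$, and cites \cite[Proposition VII.7.2]{FS21} for the explicit left adjoint of $i^{b*}$ in terms of the chart $\cl{M}_b=\Bun_P^b$---but this is the same content as your ``direct adjunction calculation'': the step $\mfr{q}_{b*}^\lis\mfr{p}_b^*\cong i^{b*}$ that your adjunction argument needs is exactly what the cited FS21 proposition supplies.
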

\begin{proof}
	The case (1) is treated in the proof of \Cref{prop:adjlis}. 
	The case (2) follows from the argument in \Cref{lem:repsec}. In case (3), we have $\CT_\ov{P}^\un=i^{b*}\otimes K_{\Bun_\ov{P}}^{-1}$. Since $K_{\Bun_\ov{P}}^{-1}=K_{\Bun_P}$, we obtain the description of its left adjoint from \cite[Proposition VII.7.2]{FS21}.
\end{proof}

We normalize geometric Eisenstein series and geometric constant terms as follows so that they are compatible with spectral counterparts. 

\begin{defi}\label{defi:normalization}
	Let $\Lambda$ be a torsion $\bb{Z}[\invp]$-algebra or a $\Zl$-algebra with a square root $\sqrt{q}\in \Lambda$. 
	The normalized geometric Eisenstein series functor and the normalized geometric constant term functor are defined as
	\[
		\Eis_P=\Eis^\un_P((-)\otimes K_{\Bun_P}^{1/2}),\hspace{5pt}
		\CT_P=\CT^\un_P(-)\otimes K_{\Bun_P}^{1/2}.
	\]
\end{defi}

It is easy to see that the second adjointness still holds after taking this normalization. This is compatible with the normalization in \cite{HI24}. As in \cite[Lemma 4.17]{HI24}, normalized geometric Eisenstein series and normalized geometric constant terms are associative. 

\subsection{Relation to the classical approach in the geometric Langlands}\label{ssc:DG}

Our proof of \Cref{thm:unEis} decomposes the correspondence along $\Bun_P$ into certain basic correspondences and finally relies on the representation-theoretic second adjointness proved in \cite{DHKM24} and hyperbolic localization for diamonds established in \cite{FS21}. 
On the contrary, in the geometric Langlands, the second adjointness is proved uniformly via hyperbolic localization in \cite{DG16}. 
In this section, we explain the difference between the local and geometric settings. 

An approach taken in \cite{DG16} is a direct construction of an adjunction between $\Eis^\un_P$ and $\CT^\un_{\ov{P}}$. 
More precisely, they constructed (actually using $\Eis^\un_{P*}$ and $\CT^\un_{\ov{P}*}$) a unit $\alpha\colon \id\to \CT^\un_{\ov{P}}\circ \Eis^\un_P$ and a counit $\beta\colon \Eis^\un_P\circ \CT^\un_{\ov{P}}\to \id$ equipped with certain compatibility. 

The unit $\alpha$ is constructed from a morphism of correspondences induced by the open immersion $\Bun_M\hookrightarrow \Bun_\ov{P}\times_{\Bun_G} \Bun_P$. 
On the other hand, the counit $\beta$ is constructed via a \textit{degeneration} of correspondences from $\Bun_G$ to $\Bun_P\times_{\Bun_M}\Bun_{\ov{P}}$. 
This degeneration is constructed from the wonderful degeneration $\wtd{G}$ over $\Al$ equipped with an equivariant $\Gm$-action. 
The family $\wtd{G}\vert_\Gm$ is isomorphic to a constant family $G\times \Gm$ and the fiber $\wtd{G}_0$ is isomorphic to $P\times_M \ov{P}$. 
The stack $\Bun_\wtd{G}$ plays the role of a degeneration of $\Bun_G$ to $\Bun_P\times_{\Bun_M} \Bun_\ov{P}$. 

Our observation is that this degeneration $\Bun_\wtd{G}$ behaves differently in the geometric and local settings. 
Let $\Hom(X,\Al)$ (resp.\ $\Hom(X,\Gm)$) denote the functor parametrizing the set of morphisms from the relative curve $X$ to $\Al$ (resp.\ $\Gm$). 
The stack $\Bun_\wtd{G}$ is a family over $\Hom(X,\Al)$ which is constant over $\Hom(X,\Gm)$. 
In the geometric setting, $\Hom(X,\Al)$ (resp.\ $\Hom(X,\Gm)$) is isomorphic to $\Al$ (resp.\ $\Gm$), so we may apply hyperbolic localization to the family $\Bun_\wtd{G}$, which enables us to obtain the counit $\beta$. 
However, in the local setting, $\Hom(X,\Al)$ (resp.\ $\Hom(X,\Gm)$) is isomorphic to $\underline{E}$ (resp.\ $\underline{E}^\times$). 
Thus, the family $\Bun_\wtd{G}$ is totally disconnected in a sense, and hyperbolic localization cannot be applied as it is.
It is suggested from the representation-theoretic behavior studied in \cite{BK15} (with complex coefficients) that we need to study the asymptotic behavior of the degeneration. 
We think that this difference stems from the essential difference between schemes and $\bb{Q}_p$-manifolds. 

\section{Cuspidal supports}
\label{sec:spact}

\subsection{Spectral Eisenstein series}\label{ssc:specEis}

In this section, we review the properties of the moduli stacks of $L$-parameters developed in \cite{DHKM20}, \cite{Zhu21} and \cite{FS21}. 
Here, we assume $\Lambda=\Qla$. 

For a linear algebraic group $H$ over $E$ with an $L$-group ${}^LH$ over $\Qla$, the derived moduli stack of $L$-parameters valued in ${}^LH$ is denoted by $\Par_{{}^LH}$. 
It is quasi-smooth with trivial dualizing complex (see \cite[Remark 2.3.8]{Zhu21}) and it is classical if $H$ is reductive. 
From now on, let $G$ be a quasi-split connected reductive group over $E$. 
For a parabolic subgroup $P$ of $G$ with a Levi subgroup $M$, the spectral Eisenstein series is the functor associated with the following correspondence. 
\[
\begin{tikzcd}
    & \Par_\LP \arrow[ld, "p^\spec"']  \arrow[rd, "q^\spec"] & \\
    \Par_\LG & & \Par_\LM
\end{tikzcd}
\]
Here, $p^\spec$ is proper and schematic, and $q^\spec$ is quasi-smooth. 
The key player on the spectral side is the derived category of ind-coherent sheaves on $\Par_\LG$ denoted by $\IndCoh(\Par_\LG)$. 
Note that the nilpotent condition on singular supports automatically holds in characteristic $0$ (\cite[Proposition VIII.2.11]{FS21}). 
Let $\Sing(\Par_\LG)$ be the stack of singularities of $\Par_\LG$. 
Its closed points can be described as follows (see \cite[Corollary VIII.2.3]{FS21}).
\[\Sing(\Par_\LG)(\Qla)=\{\varphi\colon W_E\to \LG(\Qla), \xi \in H^0(W_E,\widehat{\mfr{g}}^\ast_\varphi(1)) \}/\sim \]
Here, the action of $W_E$ on the dual $\widehat{\mfr{g}}^\ast$ of the Lie algebra of $\widehat{G}$ is twisted by $\varphi$ and a Tate twist and the equivalence relation $\sim$ denotes the conjugation by $\widehat{G}$. 
From this description, we see that the singularities of $\Par_\LG$ are nilpotent over $\Qla$.
We define the spectral Eisenstein series as the functor 
\[\Eis^\spec_P=(p^\spec)_*(q^\spec)^!\colon \IndCoh(\Par_\LM)\to \IndCoh(\Par_\LG).\]
In this definition, we use the formalism developed in \cite{GR17I} and \cite{GR17II}. 
The functors $(p^\spec)_*$ and $(q^\spec)^!$ preserve colimits and should be regarded as spectral counterparts of $\mfr{p}_!$ and $\mfr{q}^*$.
The functor $\Eis_P^\spec$ factors through $\IndCoh(\Par_\LP)$ and we can exactly determine the subcategory of $\IndCoh(\Par_\LP)$ it factors through in the same way as \cite[Proposition 13.4.4]{AG15}. 
Let $\widehat{\mfr{p}}$ be the Lie algebra of $\widehat{P}$ and $\widehat{\mfr{u}}$ be its unipotent radical. 

\begin{prop}\label{prop:nilpp}
	Consider the morphism $\Sing(q^\spec)\colon \Sing(\Par_\LM)_{\Par_\LP}\to \Sing(\Par_\LP)$ of the stacks of singularities and let $\Nilp_P\subset \Sing(\Par_\LP)$ be the image of the morphism. Its closed points can be described as 
	\[\Nilp_P(\Qla)=\{\varphi\colon W_E\to \LP(\Qla), \xi \in H^0(W_E,\widehat{\mfr{p}}^\ast_\varphi(1)) \st \xi\vert_{\widehat{\mfr{u}}}=0\}/\sim \]
	and the essential image of $(q^\spec)^!$ generates the subcategory $\IndCoh_{\Nilp_P}(\Par_\LP)$. 
\end{prop}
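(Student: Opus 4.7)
The plan is to separate the claim into a cotangent-complex calculation describing the image of $\Sing(q^\spec)$ on closed points, and an appeal to the singular-support formalism for ind-coherent sheaves under quasi-smooth pullbacks.

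For the closed-points description, I would use that the tangent complex of $\Par_\LP$ at a $\Qla$-point $\varphi : W_E \to \LP(\Qla)$ is computed by Galois cohomology of $\widehat{\mfr{p}}$ twisted by $\varphi$, so that $\Sing(\Par_\LP)(\Qla)$ has the same form as $\Sing(\Par_\LG)(\Qla)$ with $\widehat{\mfr{g}}$ replaced by $\widehat{\mfr{p}}$, and likewise for $\LM$. The morphism $q^\spec$ is induced by the quotient $\LP \twoheadrightarrow \LM$, and the short exact sequence of Lie algebras
\[0 \to \widehat{\mfr{u}} \to \widehat{\mfr{p}} \to \widehat{\mfr{m}} \to 0\]
gives the fiber sequence of cotangent complexes $(q^\spec)^* L_{\Par_\LM} \to L_{\Par_\LP} \to L_{q^\spec}$, whose relative term is controlled by $\widehat{\mfr{u}}$. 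On $\Qla$-points, $\Sing(q^\spec)$ sends $(\varphi, \xi_{\mfr{m}})$ to $(\varphi, \xi_{\mfr{m}} \circ \pi)$ with $\pi : \widehat{\mfr{p}} \twoheadrightarrow \widehat{\mfr{m}}$. The long exact sequence in Galois cohomology of the Tate-twisted dual sequence $0 \to \widehat{\mfr{m}}^*(1) \to \widehat{\mfr{p}}^*(1) \to \widehat{\mfr{u}}^*(1) \to 0$ then identifies the image of $H^0(W_E, \widehat{\mfr{m}}^*_\varphi(1))$ inside $H^0(W_E, \widehat{\mfr{p}}^*_\varphi(1))$ with the kernel of the restriction to $\widehat{\mfr{u}}$, giving the stated description of $\Nilp_P(\Qla)$.

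For the generation statement, I would follow the proof of \cite[Proposition 13.4.4]{AG15} essentially verbatim. The general principle, valid for any quasi-smooth morphism $f : X \to Y$ of quasi-smooth stacks, is that $f^!$ lands in $\IndCoh_{N_f}(X)$ for $N_f \subset \Sing(X)$ the image of $\Sing(f)$, and that its essential image colimit-generates this subcategory. In \emph{loc.\ cit.} this is proved via Koszul duality, reducing to a statement about modules over the symmetric algebra of the relative cotangent complex of $f$. Applied to $q^\spec$ with $N_{q^\spec} = \Nilp_P$ as identified above, this gives that the essential image of $(q^\spec)^!$ generates $\IndCoh_{\Nilp_P}(\Par_\LP)$.

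I expect the main obstacle to be the bookkeeping required to import the singular-support formalism of \cite{AG15} to $\Par_\LP$: one must verify compatibility between the closed-points description of $\Nilp_P$ above and the conical closed subset formalism underlying the generation result, and in particular that $\Nilp_P$ is stable under the $\Gm$-action scaling the cotangent fibers. Since $\Par_\LP$ is classical (as $P$ is reductive), quasi-smooth with explicitly computable cotangent complex, and all the relevant coherent geometry takes place over the field $\Qla$, this translation should be formal, though the Koszul-theoretic identification of $\IndCoh_N$-categories requires care.
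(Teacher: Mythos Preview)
Your approach matches the paper's: the closed-points description follows from the explicit cotangent-complex computations exactly as you outline, and the generation statement by importing \cite[Proposition 13.4.4]{AG15}. One correction: the parabolic $P$ is not reductive, so $\Par_\LP$ is quasi-smooth but need not be classical; this does not invalidate the argument, since the singular-support formalism requires only quasi-smoothness, but your stated justification for why the translation is formal is wrong. The paper supplies the specific technical input you flag as the expected obstacle: after stratifying by the level on wild inertia, each piece of $\Par_\LP$ is a quotient of an affine derived scheme and $q^\spec$ is a quotient of an affine morphism by a unipotent group (\cite[Proposition 2.2.3]{Zhu21}), which matches the geometric setup of \cite{AG15} and allows their Koszul-duality argument to go through verbatim.
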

\begin{proof}
	The first claim follows from the explicit descriptions of the stacks of singularities. 
	The second claim can be proved in the same way as \cite[Proposition 13.4.4]{AG15}. 
	It is because for each open subgroup $P$ of the wild inertia subgroup of $W_E$, the stack of $L$-parameters trivial on $P$ is represented by a quotient of an affine derived scheme, and $q^\spec$ is represented by a quotient of an affine morphism by a unipotent group as in the geometric situation (see \cite[Proposition 2.2.3]{Zhu21}). 
\end{proof}

The coarse moduli space of $\Par_{\LG}$ is denoted by $\Par^\ssi_{\LG}$. 
Its closed point corresponds to a $\widehat{G}$-conjugacy class of semisimple $L$-parameters. 
For each semisimple $L$-parameter $\varphi\colon W_E\to \LG(\Qla)$, there is a Levi subgroup $M$ of $G$ such that $\varphi$ is $\widehat{G}$-conjugate to a supercuspidal $L$-parameter of $M$ and the Levi subgroup $M$ is unique up to $G$-conjugacy (see \cite[Section 2.3]{BMO22}). 
We call the conjugacy class of $M$ the cuspidal support of the semisimple $L$-parameter $\varphi$. 
As we show in the following proposition, cuspidal supports are locally constant on $\Par^\ssi_{\LG}$. 

\begin{prop}
	The cuspidal supports of the closed points of $\Par_\LG^\ssi$ are constant on each connected component of $\Par^\ssi_\LG$. 
\end{prop}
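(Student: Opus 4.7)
The strategy is to show that for each conjugacy class $[M]$ of Levi subgroups of $G$, the locus $Y^\ssi_{[M]} \subseteq \Par^\ssi_\LG$ of semisimple parameters with cuspidal support exactly $[M]$ is a union of connected components. First I would check that $Y^\ssi_{[M]}$ is locally closed: by the properness of $p^\spec \colon \Par_\LP \to \Par_\LG$, the image of $\Par_\LP$ in $\Par^\ssi_\LG$ is closed, and this image coincides with $\bigsqcup_{[M'] \leq [M]} Y^\ssi_{[M']}$, since a semisimple $\varphi \in \Par_\LG$ factors through $\LM$ if and only if its cuspidal support is contained in $[M]$ up to $G$-conjugacy. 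Hence $Y^\ssi_{[M]}$ is the complement inside this closed set of the union $\bigsqcup_{[M'] \lneq [M]} Y^\ssi_{[M']}$, itself a finite union of closed sets (indexed by maximal proper sub-Levis of $M$). In particular, $Y^\ssi_{[M]}$ is open in its closure, so the full stratification is at least constructible.

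The main step is to upgrade this to openness in $\Par^\ssi_\LG$. For this I would invoke the classification of connected components of $\Par_\LG$ developed in \cite{DHKM20} (and revisited in \cite{Zhu21} and \cite{FS21}): over $\Qla$, each connected component of $\Par_\LG$ corresponds to an inertial equivalence class of a supercuspidal pair $(M, \varphi_M)$, and the Levi $M$ is intrinsic to this data (it is recovered from the inertial class by looking at the maximal torus in $C_{\widehat{G}}(\varphi)^0$ for any $\varphi$ in the component). Since the connected components of $\Par^\ssi_\LG$ coincide with those of $\Par_\LG$, this makes the cuspidal support a function of the connected component. Combining with the first paragraph, each $Y^\ssi_{[M]}$ is clopen, giving the proposition.

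The main obstacle is making the classification argument precise in the exact form we need: the statement that the inertial class determines $M$ up to $G$-conjugacy is essentially the spectral shadow of the Bernstein decomposition, but extracting it directly from \cite{DHKM20} requires some care. A more self-contained alternative would be to use the quasi-smoothness of $\Par_\LG$ and the explicit description of $\Sing(\Par_\LG)$ from \Cref{prop:nilpp} to analyze the deformation theory of a semisimple parameter $\varphi$ and show that the conjugacy class of a maximal torus in $C_{\widehat{G}}(\varphi)^0$ is rigid under deformations within $\Par^\ssi_\LG$; this would give local constancy of the cuspidal support directly, without appeal to the component classification.
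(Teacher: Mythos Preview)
Your core idea—invoke the description of connected components of $\Par^\ssi_\LG$ from \cite{DHKM20}—is exactly what the paper does, but the paper's execution is shorter and bypasses your first paragraph entirely. The paper fixes a semisimple $\varphi$ with cuspidal support $M$, observes that $Z(\widehat{M})^{\Gamma,\circ}\subseteq C_{\varphi(I_F)}$ so that one may choose a maximal torus $\widehat{T}_\varphi$ of $C_{\varphi(I_F)}$ lying inside $\widehat{M}$, and then cites \cite[Theorem~1.7]{DHKM20}: every closed point of the component containing $\varphi$ is an unramified twist of $\varphi$ by some $\hat{t}\in\widehat{T}_\varphi\subseteq\widehat{M}$, hence still factors through $\LM$ with cuspidal support $[M]$. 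Your constructibility step (closedness of $\bigcup_{[M']\leq[M]}Y^\ssi_{[M']}$ via properness of $p^\spec$) is plausible but unnecessary: once each connected component lies in a single $Y^\ssi_{[M]}$ you are done, as connected components of a finite-type $\Qla$-scheme are already clopen. Your alternative deformation-theoretic route through $\Sing(\Par_\LG)$ is not pursued in the paper and would need substantially more work to make precise.
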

\begin{proof}
	Let $\varphi\colon W_E\to \LG(\Qla)$ be a semisimple $L$-parameter. 
	We may assume that $\varphi$ comes from a supercuspidal $L$-parameter of a Levi subgroup $M$ of $G$.
	Since the torus $Z(\widehat{M})^{\Gamma,\circ}$ is contained in the centralizer $C_{\varphi(I_F)}$ of $\varphi(I_F)$, we can take a maximal torus $\widehat{T}_\varphi$ of $C_{\varphi(I_F)}$ inside $\widehat{M}$. 
	Then, the claim follows from the description of the connected component containing $\varphi$ in \cite[Theorem 1.7]{DHKM20} which states that every closed point of it corresponds to an unramified twist of $\varphi$ by some $\hat{t}\in \widehat{T}_\varphi$. 
\end{proof}
\begin{defi}
	Let $[M]$ be a conjugacy class of Levi subgroups of $G$. 
	The closed and open subscheme of $\Par^\ssi_\LG$ consisting of semisimple $L$-parameters with cuspidal supports $[M]$ is denoted by $\Par^\ssi_{\LG,[M]}$. 
\end{defi}

The subscheme $\Par^\ssi_{\LG,[M]}$ is also described as the image of the map $\Par^\ssi_{{}^LM,[M]}\to \Par^\ssi_{\LG}$. 
The inverse image of $\Par^\ssi_{\LG,[M]}$ along the map $\Par_\LG\to \Par^\ssi_\LG$ is denoted by $\Par_{\LG,[M]}$. 
We have a decomposition $\Par_\LG=\coprod_{[M]} \Par_{\LG,[M]}$. It can be used to determine the subcategory of $\IndCoh(\Par_\LG)$ generated by the essential images of the spectral Eisenstein series $\Eis^\spec_P$ as in \cite[Theorem 13.4.2]{AG15}. 

\begin{prop}\label{prop:esssurj}
	The category $\IndCoh(\Par_{\LG,[M]})$ is generated by the essential images of the functors 
	\[\Eis^\spec_{P,[M]}\colon \IndCoh(\Par_{{}^LM,[M]})\to \IndCoh(\Par_{\LG,[M]})\]
	for all parabolic subgroups $P$ with Levi subgroups $M$. 
\end{prop}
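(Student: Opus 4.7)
My plan is to adapt \cite[Theorem 13.4.2]{AG15} to the moduli stack of local $L$-parameters, using the cuspidal support decomposition to localize. After restricting to the $[M]$-component on both source and target, \Cref{prop:nilpp} implies that the essential image of $(q^\spec)^!\colon \IndCoh(\Par_{\LM,[M]}) \to \IndCoh(\Par_{\LP,[M]})$ generates $\IndCoh_{\Nilp_P}(\Par_{\LP,[M]})$. Consequently, the cocomplete subcategory of $\IndCoh(\Par_{\LG,[M]})$ generated by the essential images of $\Eis^\spec_{P,[M]}$ agrees with the one generated by $(p^\spec)_*\bigl(\IndCoh_{\Nilp_P}(\Par_{\LP,[M]})\bigr)$, so the problem reduces to showing that these subcategories jointly generate as $P$ ranges over parabolics with Levi $M$.

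Because $p^\spec$ is schematic and proper, $(p^\spec)_*$ admits a continuous right adjoint $(p^\spec)^!$, and a standard orthogonality argument reduces the claim to proving that any $\cl{F} \in \IndCoh(\Par_{\LG,[M]})$ satisfying $(p^\spec)^!(\cl{F}) \in \IndCoh_{\Nilp_P}(\Par_{\LP,[M]})^\perp$ for every $P$ with Levi $M$ must vanish. Here I would use that in characteristic zero all singular supports are automatically nilpotent (\cite[Proposition VIII.2.11]{FS21}) and that singular supports behave well under proper $!$-pullback, to check that $(p^\spec)^!(\cl{F})$ itself lies in $\IndCoh_{\Nilp_P}(\Par_{\LP,[M]})$. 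Being simultaneously inside and orthogonal to this subcategory, $(p^\spec)^!(\cl{F})$ has no self-maps and therefore vanishes for every such $P$.

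To deduce $\cl{F}=0$ from the simultaneous vanishing of all $(p^\spec)^!(\cl{F})$, I would invoke the geometry of the cuspidal support locus. By definition, every semisimple $L$-parameter in $\Par^\ssi_{\LG,[M]}$ is $\widehat{G}$-conjugate to a supercuspidal $L$-parameter of $M$ and hence factors through $\LP$ for some $P$ with Levi $M$, and by \cite[Theorem 1.7]{DHKM20} the same persists throughout the ambient connected component, which consists of unramified twists by elements of a maximal torus inside $\widehat{M}$. Combined with the explicit local chart descriptions in \cite{DHKM20}, this shows that the maps $\{p^\spec\colon \Par_{\LP,[M]} \to \Par_{\LG,[M]}\}_P$ are jointly surjective on geometric points, and after passing to formal neighborhoods the family $\{(p^\spec)^!\}_P$ is jointly conservative on $\IndCoh(\Par_{\LG,[M]})$, giving the desired vanishing.

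The main obstacle I anticipate is this last joint conservativity step. The images of the individual $p^\spec$ are not in general locally closed in $\Par_{\LG,[M]}$; they cover it only after $\widehat{G}$-conjugation, so one must descend carefully from $\Par_\LP \times^{\widehat{P}} \widehat{G}$ through the $\widehat{G}$-quotient and check compatibility with the ind-scheme presentation of $\Par_\LG$ coming from finite-level quotients of the wild inertia. The explicit description of formal neighborhoods at semisimple parameters in \cite{DHKM20} should make this rigorous, and together with the two previous steps it completes the proof.
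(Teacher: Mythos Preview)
Your first reduction (via \Cref{prop:nilpp}) to showing that the images of $(p^\spec)_*$ from $\IndCoh_{\Nilp_P}(\Par_{\LP,[M]})$ jointly generate is correct and matches the paper. The gap is in your second paragraph: the claim that $(p^\spec)^!(\cl{F})$ automatically lies in $\IndCoh_{\Nilp_P}(\Par_{\LP,[M]})$ is wrong, and this is the step that does all the work.

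You are conflating two different singular support conditions. The statement from \cite[Proposition VIII.2.11]{FS21} says that on $\Par_\LG$ every $\xi\in H^0(W_E,\widehat{\mfr{g}}^\ast_\varphi(1))$ is nilpotent; that is the condition ``$\Nilp$'' for $G$. The condition $\Nilp_P\subset\Sing(\Par_\LP)$ defined in \Cref{prop:nilpp} is \emph{not} the analogous nilpotency condition on $\Par_\LP$: it is the much stronger requirement $\xi\vert_{\widehat{\mfr{u}}}=0$. Under the singular codifferential $\Sing(p^\spec)$, an element $\xi\in H^0(W_E,\widehat{\mfr{g}}^\ast_\varphi(1))$ is sent to its restriction $\xi\vert_{\widehat{\mfr{p}}}\in H^0(W_E,\widehat{\mfr{p}}^\ast_\varphi(1))$, and there is no reason this should vanish on $\widehat{\mfr{u}}$. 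So the functorial bound on the singular support of $(p^\spec)^!(\cl{F})$ only places it in $\IndCoh_{\Sing(p^\spec)(\Sing(\Par_\LG)_{\Par_\LP})}$, which is in general strictly larger than $\IndCoh_{\Nilp_P}$. Your ``inside and orthogonal, hence zero'' argument therefore does not go through, and the subsequent joint conservativity step, even if granted, would only yield generation by the full $(p^\spec)_*\IndCoh(\Par_{\LP,[M]})$, not by the $\Nilp_P$-part you need.

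What the paper does instead is exactly to confront this point. Following \cite[Theorem 13.4.2]{AG15}, it reduces to the surjectivity of
\[
\coprod_P \Sing(p^\spec)^{-1}(\Nilp_P)\longrightarrow \Sing(\Par_{\LG,[M]})
\]
on $\Qla$-points, i.e.\ to showing that for every pair $(\varphi,\xi)$ one can conjugate so that $\varphi$ factors through $\LP$ \emph{and simultaneously} $\xi\in\widehat{\mfr{p}}$ (equivalently $\xi\vert_{\widehat{\mfr{u}}}=0$) for some $P$ with Levi $M$. This is a genuine Lie-theoretic statement: one has to find a single cocharacter of $Z(\widehat{M})^{\Gamma,\circ}$ with respect to which the unipotent part of $\varphi(\Fr)$, the monodromy $N$, and the nilpotent $\xi$ are all strictly positive. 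The paper isolates this as a separate Jacobson--Morozov--type lemma (\Cref{lem:Jordan}). Your proposal never produces such a cocharacter; the semisimple-level input you cite from \cite{DHKM20} controls $\varphi^\ssi$ but says nothing about aligning $\xi$ (or even the non-semisimple part of $\varphi$) with the same parabolic.
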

\begin{proof}
	By \Cref{prop:nilpp}, it is enough to show that the essential images of 
	\[(p^\spec)_*\colon \IndCoh_{\Nilp_P}(\Par_{\LP,[M]})\to \IndCoh(\Par_{\LG,[M]})\]
	for all $P$ generate the target category. 
	Since $p^\spec$ is proper and schematic, it is enough to show that $\coprod_{P}\Sing(p^\spec)^{-1}(\Nilp_P)\to \Sing(\Par_{\LG,[M]})$ is surjective on $\Qla$-points as in \cite[Theorem 13.4.2]{AG15}. 
	Take a closed point $(\varphi,\xi)\in \Sing(\Par_\LG)(\Qla)$ via the explicit description and fix an isomorphism $\widehat{\mfr{g}}^\ast\cong \widehat{\mfr{g}}$ as $\widehat{G}$-representations (coming from the Killing form). 
	Since $\widehat{\mfr{u}}^{\perp}\cong\widehat{\mfr{p}}$ via this isomorphism, it is enough to show that after taking a $\widehat{G}$-conjugation, $\varphi$ factors through $\LP$ and $\xi$ lies in $\widehat{\mfr{p}}$ for some $P$. 

	Let $\varphi^\ssi$ be the semisimplification of $\varphi$ and let $N$ be the monodromy of $\varphi$. 
	Fix a geometric Frobenius $\Fr\in W_E$ and let $u=\varphi(\Fr)\varphi^\ssi(\Fr)^{-1}$. 
	The elements $u$, $N$ and $\xi$ are unipotent and commute with each other. 
	The centralizer $C_{\varphi^\ssi(I_F)}$ is reductive and normalized by $\varphi^\ssi(\Fr)$, and it contains $u$, $N$, $\xi$ and some power of $\varphi^\ssi(\Fr)$ since the image of $\varphi^\ssi(I_F)$ is finite. 
	Moreover, we have $\Ad_{\varphi^\ssi(\Fr)}(u)=u$, $\Ad_{\varphi^\ssi(\Fr)}(N)=q^{-1}N$ and $\Ad_{\varphi^\ssi(\Fr)}(\xi)=q\xi$. 
	Thus, we may apply \Cref{lem:Jordan} to the reductive group generated by $C_{\varphi^\ssi(I_F)}$ and $\varphi^\ssi(\Fr)$ for the elements $\varphi^\ssi(\Fr)$, $u$, $\exp(N)$ and $\exp(\xi)$. 
	Then, we get a cocharacter $\mu\colon \Gm\to \widehat{G}$ commuting with $\varphi^\ssi$ such that $u$ (resp.\ $N$ and $\xi$) lies in $\widehat{G}^{\mu>0}$ (resp.\ $\widehat{\mfr{g}}^{\mu>0}$). 
	The cocharacter $\mu$ takes values in $S_{\varphi^\ssi}^\circ$, so by replacing $\varphi^\ssi$ so that it takes values in $\LM$ and then taking a conjugation by an element of $S_{\varphi^\ssi}$, we may assume by \cite[Lemma 2.5]{BMO22} that $\mu$ is a cocharacter of $Z(\widehat{M})^{\Gamma,\circ}$. 
	Then, for every $P$ such that $\widehat{P}$ contains $\widehat{G}^{\mu>0}$, 
	we see that $u$ (resp.\ $N$ and $\xi$) lies in $\LP$ (resp.\ $\widehat{\mfr{p}}$). 
\end{proof}

\begin{lem}\label{lem:Jordan}
	Let $G$ be a (possibly disconnected) reductive group over an algebraically closed field $k$ of characteristic $0$. 
	Let $s$ be a semisimple element of $G(k)$ and let $u_1,\ldots,u_n$ be unipotent elements of $G(k)$ satisfying
	\begin{enumerate}
		\item $su_is^{-1}=u_i^{q_i}$ for some $q_i\in k^\times$, and 
		\item $u_iu_j=u_ju_i$ for every $1\leq i,j \leq n$. 
	\end{enumerate}
	Here, $u_i^{q_i}$ denotes the image of $q_i$ along the exponential map $\exp_{u_i}\colon \Ga \to G$ sending $1$ to $u_i$. 
	Then, there is a cocharacter $\mu\colon \Gm\to G$ satisfying $s\in G^{\mu=0}$ and $u_i\in G^{\mu>0}$ for every $1\leq i \leq n$. 
\end{lem}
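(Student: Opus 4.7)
The plan is to construct $\mu$ by finding a Borel subgroup $B$ of $G^\circ$ that is normalized by $s$ and contains all the $u_i$, and then taking $\mu$ to be an $s$-invariant strongly $B$-dominant cocharacter of an $s$-stable maximal torus $T\subset B$. Since the $u_i$ commute and are unipotent, their logarithms $N_i=\log u_i$ span an abelian Lie subalgebra $\mfr{a}\subset\mfr{g}$ of nilpotent elements with $\Ad(s)N_i=q_iN_i$, so the connected abelian unipotent subgroup $A=\exp(\mfr{a})\subset G^\circ$ is normalized by $s$.

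The first task is to establish the existence of such a Borel $B$. Since any connected unipotent subgroup of $G^\circ$ lies in a Borel, the variety $\cl{B}_A$ of Borels of $G^\circ$ containing $A$ is a nonempty projective subvariety of the flag variety of $G^\circ$. The element $s$ normalizes $A$, so conjugation by $s$ acts on $\cl{B}_A$, and the goal is to produce an $s$-fixed point. I would first apply Borel's fixed point theorem to the connected solvable subgroup $T_0^\circ$ of $\overline{\langle s\rangle}$ (the identity component of the Zariski closure of $\langle s\rangle$, which is a torus) to obtain a nonempty fixed locus $\cl{B}_A^{T_0^\circ}$, and then handle the residual action of the finite quotient $\overline{\langle s\rangle}/T_0^\circ$ using Steinberg's theorem on stable Borel pairs for semisimple automorphisms of connected reductive groups, adapted to the constraint of containing the unipotent subgroup $A$.

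Given such $B$ and an $s$-stable maximal torus $T\subset B$, the action of $s$ on $X_*(T)$ has finite order, since it factors through the finite group $\mrm{Aut}(G^\circ,T,B)/T$ (using that $G/G^\circ$ is finite and that the outer automorphism group of a connected reductive group is finite on the semisimple quotient, while the central torus part is acted on through $G/G^\circ$). Then for any strongly $B$-dominant $\mu_0\in X_*(T)$ (for instance, twice the sum of the fundamental coweights), the averaged cocharacter $\mu:=\sum_{i=0}^{m-1}s^i\mu_0$ over its $s$-orbit is $s$-invariant and remains strongly $B$-dominant, since $s$ preserves the positive Weyl chamber.

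To conclude, $s$-invariance of $\mu$ gives $\mu(t)s=s\mu(t)$ for all $t$, so $s\in G^{\mu=0}$, while strong $B$-dominance implies that $\Ad(\mu(t))$ acts with strictly positive weight on $\mfr{u}_B=\bigoplus_{\alpha>0}\mfr{g}_\alpha$, and in particular on each $N_i\in\mfr{a}\subset\mfr{u}_B$, yielding $u_i=\exp(N_i)\in G^{\mu>0}$ as desired. The main obstacle is the second step: ensuring the existence of an $s$-stable Borel containing $A$ when $\overline{\langle s\rangle}$ is not connected. This requires a suitable refinement of Steinberg's theorem to respect the additional unipotent constraint, which is the technical heart of the proof.
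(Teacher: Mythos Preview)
Your strategy is genuinely different from the paper's. The paper argues by induction on $n$: for $n\geq 1$ it invokes a Jacobson--Morozov-type construction (citing \cite{Imai21}) to produce $\theta\colon \SL_2\to G$ with $\theta\bigl(\begin{smallmatrix}1&1\\0&1\end{smallmatrix}\bigr)=u_1$ such that $s'=s\cdot(\theta\circ\alpha^\vee)(q_1^{-1/2})$ commutes with $\theta$. The Levi decomposition $Z_G(u_1)=G^\theta\ltimes U^{u_1}$ from \cite{BV85} then lets one project $u_2,\ldots,u_n$ to the reductive group $G^\theta$, apply the inductive hypothesis there to $s'$ and these projections to get some $\mu'$, and finally take $\mu=\mu'\cdot(\theta\circ\alpha^\vee)^N$ for $N\gg 0$.

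As written, however, your proposal is not a proof. You correctly isolate the key step---an $s$-stable Borel $B\supset A$ together with an $s$-stable maximal torus $T\subset B$---but you establish neither. The first you explicitly leave open, calling it ``the technical heart of the proof'' and deferring to an unstated refinement of Steinberg's theorem; the second you simply assume. Neither follows from what you wrote: a finite group acting on the projective variety $\cl{B}_A^{T_0^\circ}$ has no reason to have a fixed point, and Steinberg's theorem only supplies \emph{some} $s$-stable pair $(B',T')$, not a maximal torus inside a prescribed $s$-stable $B$. The first gap can in fact be closed---the Borel--Tits canonical parabolic $P$ attached to $A$ is automatically $s$-stable with $A\subset R_u(P)$, so Steinberg applied to the Levi quotient $P/R_u(P)$ produces an $s$-stable Borel of $G^\circ$ inside $P$, hence containing $A$---but the second then still requires a separate vanishing argument for $H^1$ of $\langle s\rangle$ in the unipotent radical. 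Supplying all of this is essentially the entire content of the lemma, and the paper's inductive route via $\SL_2$-triples avoids both issues cleanly.
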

\begin{proof}
	We prove by induction on $n$. Since $s$ is semisimple, it is contained in a maximal torus of $G$, so the claim follows for $n=0$. 
	Suppose that $n>0$. 
	As in \cite[Lemma 1.12]{Imai21}, there is a homomorphism $\theta\colon \SL_2\to G$ such that $\theta\left(\left(\begin{matrix} 1 & 1\\0 & 1\end{matrix}\right)\right)=u_1$ and $s'=s\cdot (\theta\circ \alpha^\vee)(q_1^{-1/2})$ commutes with $\theta$ where $\alpha^\vee$ is the diagonal cocharacter of $\SL_2$. 
	By \cite[Proposition 2.4]{BV85}, the centralizer of $u_1$ has a Levi decomposition $G^\theta\ltimes U^{u_1}$. 
	Here, $G^\theta$ is the centralizer of $\theta$ and the unipotent radical $U^{u_1}$ is positive with respect to the cocharacter $\theta\circ \alpha^\vee$. 
	Take the corresponding decomposition $u_i=u'_i\cdot v'_i$ for each $i$. 
	Since $\theta\circ \alpha^\vee$ normalizes the centralizer of $u_1$ and commutes with $G^\theta$, we have $s'u'_is'^{-1}=u'^{q_i}_i$.
	Thus, we may apply the induction hypothesis to $s'$ and $u'_2,\ldots,u'_n$ in $G^\theta$. 
	Then, we get a cocharacter $\mu'\colon \Gm\to G^\theta$ which commutes with $s'$ such that $u'_2,\ldots,u'_n$ lie in the positive unipotent radical with respect to $\mu'$. 
	Consider the cocharacter $\mu'_N=\mu'(\theta\circ \alpha^\vee)^N$ for a positive integer $N$. 
	Since $\mu'$ commutes with $s'$ and $\theta$, $\mu'_N$ commutes with $s$. 
	Moreover, if $N$ is sufficiently large, $u_1,\ldots,u_n$ lie in the positive unipotent radical with respect to $\mu'_N$, so the claim holds for $\mu'_N$ for sufficiently large $N$. 
\end{proof}

\subsection{Compatibility with geometric Eisenstein series}
In this section, we first let $\Lambda$ be a $\Zl[\sqrt{q}]$-algebra such that $\pi_0(Z)$ is invertible in $\Lambda$. Here, $Z$ is the center of $G$.  
The ring of global sections on $\Par_\LG$ is denoted by $\cl{Z}^\spec(G,\Lambda)$ and this is what we call the spectral Bernstein center. 
The action of $\cl{Z}^\spec(G,\Lambda)$ on the identity endofunctor of $\cl{D}_\lis(\Bun_G,\Lambda)$ is constructed in \cite{FS21} and it is expected to correspond to the multiplicative action on the identity endofunctor of $\IndCoh(\Par_\LG)$. Thus, as claimed in \cite[Exercise 1.5.5]{Han24}, the action of the spectral Bernstein center is expected to commute with geometric Eisenstein series. 
In this section, we verify this compatibility \textit{up to filtration} using \Cref{prop:ind}. 
The category of lisse-\'{e}tale sheaves on $\Bun_G$ with quasicompact supports is denoted by $\cl{D}_\lis(\Bun_G,\Lambda)^\qc$. 
For an element $f\in \cl{Z}^\spec(G,\Lambda)$, its restriction to $\cl{Z}^\spec(M,\Lambda)$ is denoted by $f^M$. 

\begin{thm}\label{thm:compat}
	For every $A\in \cl{D}_\lis(\Bun_M,\Lambda)^\qc$, there is a finite filtration $\Fil^\bullet\Eis_P(A)$ of $\Eis_P(A)$ with an action of $\cl{Z}^\spec(M,\Lambda)$ extending the spectral action on $\Eis_P(A)$ such that for every $f\in \cl{Z}^\spec(G,\Lambda)$, the action of $f^M$ on each subquotient $\gr^{\bullet}(\Eis_P(A))$ is equal to the spectral action of $f$.  
\end{thm}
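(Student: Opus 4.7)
The plan is to apply the induction principle \Cref{prop:ind} to the property $\Psi_{(G,P,b)}$ asserting the conclusion of the theorem for every $A\in\cl{D}_\lis(\Bun_M,\Lambda)^\qc$ supported on the open substack $\Bun_M^{\preceq b}$. Since a quasicompact support meets only finitely many Newton strata, establishing $\Psi_{(G,P,b)}$ for every triple $(G,P,b)$ yields the full theorem. It then remains to verify conditions (1)--(4) of \Cref{prop:ind}.

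For condition (1), where $b\in B(M)_\bas$ and $\nu_b$ is strictly $P$-codominant, \Cref{lem:explicit}(1) identifies $\Eis_P(A)$ with a twist of $i^b_!A$; the filtration is trivial and the compatibility is immediate from the functoriality of the spectral-action construction of \cite[Chapter IX]{FS21} under locally closed pushforward on the basic stratum. For condition (2), where $P$ is maximal proper with unique simple root $\alpha\notin M$: the subcase $\langle\alpha,\nu_b\rangle<0$ reduces to (1); the subcase $\langle\alpha,\nu_b\rangle=0$ puts $b\in B(G)_\bas$, where \Cref{lem:explicit}(2) identifies $\Eis_P(A)$ with the normalized parabolic induction $\Ind^{G_b}_{P_b}(A)$ and the spectral compatibility is exactly \cite[Theorem IX.7.2]{FS21}; the subcase $\langle\alpha,\nu_b\rangle>0$ uses the presentation of \Cref{lem:explicit}(3) via the local chart $\cl{M}_b$, and the compatibility then follows from \cite[Corollary IX.7.3]{FS21} combined with the $\ell$-cohomological smoothness of $\mfr{p}_b$. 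In all three subcases a single-step filtration suffices.

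For condition (3), excision along the finite stratification of $\Bun_M^{\preceq b}$ actually met by the support of $A$ expresses $A$ as a finite iterated extension of sheaves of the form $j^{b'}_!A_{b'}$ with $A_{b'}\in\cl{D}_\lis(\Bun_M^{b'},\Lambda)$, where $b'$ runs over basic generalizations of $b$ with $\nu_{b'}$ strictly $(M\cap P')$-codominant for some $P'\subset P$. Applying $\Eis_P$ preserves these fiber sequences, so splicing together the filtrations provided by $\Psi_{(G,P',b')}$ yields the required filtration on $\Eis_P(A)$, and the graded compatibility is inherited termwise. For condition (4), the associativity $\Eis_P=\Eis_{P'}\circ\Eis_{M'\cap P}^{M'}$ (suppressing normalization) allows us first to filter $\Eis_{M'\cap P}^{M'}(A)$ using $\Psi_{(M',M'\cap P,b)}$ and then to filter each graded piece under $\Eis_{P'}$ using $\Psi_{(G,P',b)}$; transitivity $(f^{M'})^M=f^M$ of the restriction map between spectral Bernstein centers ensures that the composite filtration has the correct graded compatibility.

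The main difficulty is the strictly $P$-dominant subcase of condition (2): there $\cl{M}_b$ is not proper over $\Bun_G$, so one cannot argue via proper base change, and the desired compatibility relies on the subtle Hecke-operator calculation of \cite[Corollary IX.7.3]{FS21}. A complementary structural point is that each excision step in condition (3) unavoidably produces a nontrivial cone on which the spectral actions of $f$ and $f^M$ can differ by a term raising the filtration level; this is precisely why the conclusion is stated \emph{up to finite filtration} rather than as a strict commutation of $\Eis_P$ with the spectral Bernstein center.
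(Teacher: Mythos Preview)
Your overall architecture coincides with the paper's: apply \Cref{prop:ind} to the property $\Psi_{(G,P,b)}$ and verify conditions (1)--(4), with excision and associativity handled just as you describe. However, the treatment of the base cases has real gaps.

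First, you have swapped the roles of \cite[Theorem IX.7.2]{FS21} and \cite[Corollary IX.7.3]{FS21}. The former computes how the spectral action interacts with the stalk functor $i^{b*}$ (up to a character twist) and is what governs the strictly $P$-codominant and strictly $P$-dominant cases; the latter is the statement for parabolic induction and is what is used when $b\in B(G)_\bas$.

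More importantly, in condition (1) you say the compatibility is ``immediate from functoriality,'' but this elides the one genuinely delicate point. What \cite[Theorem IX.7.2]{FS21} actually yields is that $i^{b*}$ intertwines the spectral action of $f$ with that of $\tau_{\widehat{\alpha}(\Lambda)\circ\varphi_\xi}(f^M)$, where $\alpha=2\rho_G-2\rho_M$ and $\xi$ is the unramified character sending a uniformizer to $\sqrt{q}$. One must then check, using \Cref{lem:twist} together with the explicit computation of $K_{\Bun_P}$ in \cite{HI24}, that this twist is exactly cancelled by the normalization $K_{\Bun_P}^{1/2}$ built into $\Eis_P$ and $\CT_P$. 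The analogous twist-tracking is needed in the $b\in B(G)_\bas$ case as well. Without it the assertion is false for $\Eis_P^\un$, so ``suppressing normalization'' as you do in condition (4) is not innocuous.

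Finally, your argument for the strictly $P$-dominant subcase is not an argument: neither \cite[Corollary IX.7.3]{FS21} nor the smoothness of $\mfr{p}_b$ directly gives compatibility for $\mfr{p}_{b\natural}\mfr{q}_b^*$. The paper instead passes through the constant term: once one knows $\CT_P\vert_{\Bun_M^b}(f)=f^M$ in the strictly $P$-codominant case, the compatibility for its left adjoint $\Eis_{\ov{P}}\vert_{\Bun_M^b}$ follows formally, which is exactly the strictly $\ov{P}$-dominant case; and the strictly $P$-codominant case for $\Eis_P$ itself follows because $\Eis_P\vert_{\Bun_M^b}$ lands in sheaves supported at $b$ with $\CT_P\vert_{\Bun_M^b}\circ\Eis_P\vert_{\Bun_M^b}=\id$.
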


We need to keep track of twists carefully to see that the normalization in \Cref{defi:normalization} is the correct one. 
For this, we explicitly determine the twisted action of the spectral Bernstein center. We follow the argument noted at the end of \cite[IX.6.4]{FS21}. 

For a smooth character $\xi\colon G(E)\to \Lambda^\times$, the endomorphism of the Bernstein center $\cl{Z}(G(E),\Lambda)=\pi_0\End(\id_{\cl{D}(G(E),\Lambda)})$ sending $F$ to $F((-)\otimes \xi)\otimes \id_{\xi^{-1}}$ is denoted by $\tau_\xi$. It means that for every smooth representation $\pi$ of $G(E)$ over $\Lambda$ and every $F\in \cl{Z}(G(E),\Lambda)$, the action of $\tau_\xi(F)$ on $\pi$ is given by $F(\pi\otimes \xi)$. 
For an $L$-parameter $\varphi\colon W_E\to {}^LG(\Lambda)$ whose projection to $\widehat{G}(\Lambda)$ factors through $Z(\widehat{G})^\Gamma(\Lambda)$, there is an endomorphism of $\Par_\LG$ which sends an $L$-parameter $\psi$ to $\varphi\cdot \psi$ as $1$-cocycles valued in $\widehat{G}(\Lambda)$. The endomorphism of $\cl{Z}^\spec(G,\Lambda)$ obtained by the pullback of this endomorphism is denoted by $\tau_\varphi$. 

\begin{lem}\label{lem:twist}
	For a character $\alpha\colon G\to \Gm$ and a smooth character $\xi\colon E^\times\to \Lambda^\times$, let $\widehat{\alpha}\colon \Gm\to \widehat{G}$ be the dual cocharacter of $\alpha$ and $\varphi_\xi\colon W_E\to \Lambda^\times$ be the character corresponding to $\xi$ via the local class field theory. 
	Then, the following diagram commutes. 

	\[
	\begin{tikzcd}
		\cl{Z}^\spec(G,\Lambda) \arrow[d,"\tau_{\widehat{\alpha}(\Lambda)\circ \varphi_\xi}"] \arrow[r] & \cl{Z}(G(E),\Lambda) \arrow[d,"\tau_{\xi\circ \alpha(E)}"] \\
		\cl{Z}^\spec(G,\Lambda) \arrow[r] & \cl{Z}(G(E),\Lambda) 
	\end{tikzcd}
	\]
\end{lem}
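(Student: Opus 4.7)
The plan is to reduce commutativity of the square to the compatibility of the Fargues--Scholze spectral action with character twists, and to verify the latter on a generating family of excursion operators following the strategy sketched at the end of \cite[IX.6.4]{FS21}.

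First I would unpack the horizontal arrows. The map $\cl{Z}^\spec(G,\Lambda) \to \cl{Z}(G(E),\Lambda)$ is obtained by restricting the spectral action on $\cl{D}_\lis(\Bun_G,\Lambda)$ to the open basic stratum $\Bun_G^1 \cong [\ast/G(E)]$, under the identification of lisse-\'{e}tale sheaves there with smooth $\Lambda[G(E)]$-modules. With this in mind, commutativity of the square amounts to the identity
$$f_{\pi \otimes (\xi \circ \alpha(E))} = \bigl(\tau_{\widehat{\alpha}(\Lambda)\circ \varphi_\xi}(f)\bigr)_\pi$$
for every smooth $G(E)$-representation $\pi$ and every $f \in \cl{Z}^\spec(G,\Lambda)$, where $(-)_\pi$ denotes the induced endomorphism.

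Next I would reduce to a generating family of excursion functions on $\Par_\LG$ of the form $\varphi \mapsto \langle \xi^\vee, r(\varphi(\gamma_\bullet))\, \xi\rangle$ attached to a finite set $I$, a representation $r$ of $\widehat{G}^I$ with invariant vectors, and elements $\gamma_i \in W_E$. Since $\widehat{\alpha}$ lands in $Z(\widehat{G})^\Gamma$, the endomorphism $\tau_{\widehat{\alpha}\circ \varphi_\xi}$ multiplies the $L$-parameter by the central cocycle $\widehat{\alpha}\circ \varphi_\xi$, so on such an excursion function it simply inserts the scalar $\prod_i r_i\bigl(\widehat{\alpha}(\varphi_\xi(\gamma_i))\bigr)$, where $r_i$ records the central character of the $i$-th tensor factor of $r$ along $\widehat{\alpha}$. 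On the automorphic side the same excursion operator acts on $\pi$ through a Hecke operator $T_V$ and the partial Frobenii at the legs $\gamma_i$, so it suffices to show that $T_V$ intertwines twisting by the smooth character $\xi \circ \alpha(E)$ with the central $W_E^I$-twist by $\widehat{\alpha}\circ \varphi_\xi$. This is the Hecke-operator incarnation of local class field theory for the torus $\Gm$: pulling the character $\xi \circ \alpha(E)$ from $\Bun_G$ through the Hecke correspondence to the legs in $\mathrm{Div}^1$ produces a local system whose Weil descent datum at each $\gamma_i$ is exactly $\widehat{\alpha}(\varphi_\xi(\gamma_i))$ acting centrally on $V$, matching the scalar computed spectrally.

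The main obstacle is the book-keeping of normalizations: the geometric versus arithmetic Frobenius convention in local class field theory, the Tate twist implicit in the Hecke kernels of \cite[IX]{FS21}, and the direction of the duality between $\alpha$ and $\widehat{\alpha}$. Once these conventions are aligned with those used to define the spectral action, the argument reduces to the identity of scalars above, and the commutativity of the square follows from the fact that excursion operators generate $\cl{Z}^\spec(G,\Lambda)$.
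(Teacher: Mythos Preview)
Your reduction to the identity $f_{\pi\otimes(\xi\circ\alpha(E))}=(\tau_{\widehat{\alpha}\circ\varphi_\xi}(f))_\pi$ is exactly what the paper aims for, but the route you propose diverges from the paper's. The paper does \emph{not} unpack excursion operators directly. Instead it applies the functoriality of Fargues--Scholze parameters \cite[Theorem~IX.6.1]{FS21} to the homomorphism $\id\times\alpha\colon G\to G\times\Gm$, obtaining $f\vert_\pi=(\id\times\widehat{\alpha})^*(f)\vert_{\pi(-\xi\circ\alpha(E))\otimes\xi}$. Then it invokes \cite[Proposition~IX.6.2, Proposition~IX.6.5]{FS21} to see that the $\cl{Z}^\spec(G\times\Gm,\Lambda)$-action on the $\Gm$-factor $\xi$ factors through evaluation at $\varphi_\xi$, and finishes by observing that $\ev_{\varphi_\xi}\circ(\id\times\widehat{\alpha})^*$ is exactly pullback along multiplication by $\widehat{\alpha}\circ\varphi_\xi$. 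All the excursion-operator and Hecke-operator bookkeeping you describe is thereby absorbed into those three cited results.

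Your approach is not wrong in principle, but it effectively re-derives the content of \cite[Proposition~IX.6.5]{FS21} (local class field theory for $\Gm$ via the Fargues--Scholze construction) by hand, and the step where you ``pull the character through the Hecke correspondence'' is precisely the place where the normalizations you flag as obstacles bite hardest. The paper's functoriality trick sidesteps this entirely: by passing to the product group $G\times\Gm$ it separates the $G$-part from the $\Gm$-part cleanly, and the $\Gm$-part is then handled by quoting a result rather than recomputing it. If you want your argument to go through as written, you would need to make the Hecke-twist statement precise and prove it---which is doable, but amounts to reproving a special case of the functoriality theorem you could simply cite.
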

\begin{proof}
	Let $f$ be an element of $\cl{Z}^\spec(G,\Lambda)$ and $\pi$ be a smooth $G(E)$-representation over $\Lambda$. 
	By applying \cite[Theorem IX.6.1]{FS21} to the homomorphism $\id\times \alpha\colon G\to G\times \Gm$, we get $f\vert_\pi=(\id\times \widehat{\alpha})^*(f)\vert_{\pi(-\xi\circ \alpha(E))\otimes \xi}$. 
	By \cite[Proposition IX.6.2]{FS21} and \cite[Proposition IX.6.5]{FS21}, the action on $\pi(-\xi\circ \alpha(E))\otimes \xi$ of $\cl{Z}^\spec(G\times \Gm,\Lambda)$ factors through $\ev_{\varphi_\xi}\colon \cl{Z}^\spec(G\times \Gm, \Lambda)\to \cl{Z}^\spec(G,\Lambda)$. 
	Since the composition of $\ev_{\varphi_\xi}$ with $(\id\times \widehat{\alpha})^*$ is the pullback along the multiplication by $\widehat{\alpha}(\Lambda)\circ \varphi_\xi$, we have $f\vert_\pi=\tau_{\widehat{\alpha}(\Lambda)\circ \varphi_\xi}(f)\vert_{\pi(-\xi\circ \alpha(E))}\otimes \id_{\xi\circ\alpha(E)}$. 
\end{proof}

\begin{proof}[Proof of \Cref{thm:compat}]
	We apply \Cref{prop:ind} to the property $\Psi_{(G,P,b)}$ that the claim holds for every $A\in \cl{D}_\lis(\Bun_M^{\preceq b},\Lambda)$. 
	First, suppose that $b$ is basic and $\nu_b$ is strictly $P$-codominant. 
	In this case, we have $\CT_P\vert_{\Bun_M^b}=i^{b*}\otimes K_{\Bun_P}^{-1/2}$ and it follows from \cite[Theorem IX.7.2]{FS21} that $\CT_P\vert_{\Bun_M^b}(f)=\tau_{\widehat{\alpha}(\Lambda)\circ \varphi_\xi}(f^M)\otimes \id_{K_{\Bun_P}^{-1/2}}$ with $\alpha=2\rho_G-2\rho_M$ and $\xi$ the unramified character sending a uniformizer to $\sqrt{q}$. 
	Here, $\rho_G$ and $\rho_M$ are the half-sums of positive roots and $\alpha$ is regarded as a character of $M_b$. 
	By the calculation of \cite{HI24}, $K_{\Bun_P}^{-1/2}$ is equal to $\xi\circ\alpha(E)$ on $\Bun_M^b$. 
	Thus, we have $\CT_P\vert_{\Bun_M^b}(f)=f^M$. 
	In particular, since $\Eis_{\ov{P}}\vert_{\Bun_M^b}$ is left adjoint to $\CT_P\vert_{\Bun_M^b}$, we have $\Eis_{\ov{P}}\vert_{\Bun_M^b}(f^M)=f$. 
	Moreover, it follows from \Cref{lem:explicit} that the image of $\Eis_P\vert_{\Bun_M^b}$ is supported only at $b$ and $\CT_P\vert_{\Bun_M^b} \circ \Eis_P\vert_{\Bun_M^b} = \id_{\cl{D}_\lis(\Bun_M^b, \Lambda)}$, so we have $\Eis_P\vert_{\Bun_M^b}(f^M)=f$. 
	Thus, $\Psi_{(G,P,b)}$ holds when $b$ is basic and $\nu_b$ is strictly $P$-codominant or strictly $P$-dominant. 
	Moreover, when $b$ is basic in $G$, $\Psi_{(G,P,b)}$ follows from \cite[Corollary IX.7.3]{FS21} by handling the character twists in the same way. 
	Thus, it is enough to show that $\Psi_{(G,P,b)}$ is excisive and associative to apply \Cref{prop:ind}. 
	
	First, we show that $\Psi_{(G,P,b)}$ is excisive. Take the excision filtration of the identity functor on $\cl{D}_\lis(\Bun_M^{\preceq b},\Lambda)$ as in \cite[Proposition VII.7.3]{FS21}. Each subquotient is written as $\Eis_{M\cap P'}\vert_{\Bun_{M'}^{b'}}\circ \CT_{M \cap P'}\vert_{\Bun_{M'}^{b'}}$ for a parabolic subgroup $P'\subset P$ with a Levi subgroup $M'\subset M$ and $b'\in B(M')_\bas$ such that $\nu_{b'}$ is strictly $(M\cap P')$-codominant. 
	By applying this filtration to $A$, we get a finite filtration of $\Eis_P(A)$ each of whose subquotients is written as $\Eis_{P'}\vert_{\Bun_{M'}^{b'}}\circ \CT_{M\cap P'}\vert_{\Bun_{M'}^{b'}}(A)$. The action of $f^M$ on each subquotient is given by $\Eis_{P'}\vert_{\Bun_{M'}^{b'}}(f^{M'})$. Thus, if we have $\Psi_{(G,P',b')}$ for all $(P',b')$, we get a finite filtration of each subquotient and obtain the claim by combining these filtrations. 
	It remains to show that $\Psi_{(G,P,b)}$ is associative. Suppose that $\Psi_{(G,P',b)}$ and $\Psi_{(M',M'\cap P,b)}$ hold for some parabolic subgroup $P'\supset P$ with a Levi subgroup $M'\supset M$. Then, we may suppose that $\Fil^\bullet\Eis_{M'\cap P}(A)$ is supported on $\Bun_{M'}^{\preceq b}$. Then, we may apply $\Psi_{(G,P',b)}$ to each subquotient $\gr^\bullet\Eis_{M'\cap P}(A)$ to obtain a finite filtration of $\Eis_P(A)$. 
\end{proof}

As a corollary, the action of idempotents in the spectral Bernstein center can be shown to commute with geometric Eisenstein series. 

\begin{cor}\label{cor:idemcompat}
	For every $A\in \cl{D}_\lis(\Bun_M,\Lambda)$ and every idempotent $e\in \cl{Z}^\spec(G,\Lambda)$, we have $\Eis_P(e^M\vert_A)=e\vert_{\Eis_P(A)}$.
\end{cor}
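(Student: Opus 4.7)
The plan is to deduce this from \Cref{thm:compat} by a short nilpotence argument for commuting idempotents.

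First, reduce to the case of quasicompact support. Any $A \in \cl{D}_\lis(\Bun_M, \Lambda)$ can be written as a filtered colimit $A = \colim_\lambda A_\lambda$ of objects with quasicompact support (e.g.\ via the covering $\Bun_M = \bigcup_b \Bun_M^{\preceq b}$ by quasicompact opens). Since $\Eis_P$ is a left adjoint by \Cref{prop:adjlis}, it commutes with colimits; the formation of $e^M\vert_{(-)}$ and $e\vert_{(-)}$ is natural in the object, hence also colimit-compatible. It therefore suffices to prove the equality for each $A_\lambda$, and we may assume $A \in \cl{D}_\lis(\Bun_M, \Lambda)^\qc$.

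Second, consider the two endomorphisms of $\Eis_P(A)$ given by $a = e\vert_{\Eis_P(A)}$ (the spectral $G$-action) and $b = \Eis_P(e^M\vert_A)$ (the image under $\Eis_P$ of the spectral $M$-action on $A$). Both are idempotents. They commute: the spectral $G$-action is a natural transformation of the identity endofunctor of $\cl{D}_\lis(\Bun_G, \Lambda)$, so $a$ commutes with every endomorphism of $\Eis_P(A)$, and in particular with $b$. Now apply \Cref{thm:compat} to produce a finite filtration $\Fil^\bullet \Eis_P(A)$ of some length $N$; on each subquotient $\gr^i$ the actions of $f^M$ (via the $\cl{Z}^\spec(M,\Lambda)$-action of the theorem, which extends the functorial one and hence specializes to $\Eis_P(e^M\vert_A)$) and of $f$ (spectrally) agree for all $f \in \cl{Z}^\spec(G,\Lambda)$. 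Specializing to $f = e$, the endomorphism $a - b$ vanishes on each $\gr^i$, i.e.\ strictly lowers the filtration index; consequently $(a - b)^N = 0$.

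Third, invoke the elementary fact: for commuting idempotents $a, b$ in an associative ring, $(a-b)^N = 0$ for some $N \geq 1$ forces $a = b$. Indeed, one computes $(a-b)^{2k+1} = a - b$ and $(a-b)^{2k+2} = a + b - 2ab$ for $k \geq 0$; either the odd vanishing gives $a = b$ directly, or the even vanishing gives $a + b = 2ab$, and multiplying by $a$ (resp.\ $b$) yields $a = ab = b$. Applied to our $a, b$, this gives $\Eis_P(e^M\vert_A) = e\vert_{\Eis_P(A)}$.

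The main obstacle is the nilpotence step: one must phrase \Cref{thm:compat} sufficiently precisely to guarantee that both $a$ and $b$ preserve the filtration and act identically on the associated graded pieces, so that $a - b$ literally raises the filtration by one in the $\infty$-categorical sense; the rest is formal idempotent algebra. The reduction to quasicompact support is routine given the colimit-compatibility of all functors involved.
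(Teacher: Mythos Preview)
Your proof is correct and follows essentially the same approach as the paper: reduce to quasicompact support, use \Cref{thm:compat} to show the difference of the two idempotent actions is nilpotent along the filtration, then conclude by idempotent algebra. The only cosmetic differences are that the paper reduces to compact $A$ rather than quasicompact support, carries out the nilpotence argument inductively one filtration step at a time (obtaining $d^2=0$ at each step) rather than in one shot as $(a-b)^N=0$, and phrases the algebraic endgame via $(d+e)^2=d+e$ with $e$ central rather than your direct computation of odd and even powers of $a-b$; these are equivalent repackagings of the same idea.
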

\begin{proof}
	Since $\Eis_P$ preserves colimits and $\cl{D}_\lis(\Bun_M,\Lambda)$ is compactly generated, we may assume that $A$ is compact. 
	Then, $A$ is supported on a quasicompact open substack of $\Bun_M$, so we may apply \Cref{thm:compat} to $A$ and obtain a finite (decreasing) filtration $\Fil^\bullet \Eis_P(A)$. 
	We will show that $\Fil^\bullet \Eis_P(e^M)=e\vert_{\Fil^\bullet \Eis_P(A)}$ by induction. 
	Suppose that the equality holds for $\Fil^{n+1}\Eis_P(A)$. 
	The difference $d=\Fil^n\Eis_P(e^M)-e\vert_{\Fil^n\Eis_P(A)}$ is zero on $\Fil^{n+1}\Eis_P(A)$, so it factors through $\gr^n\Eis_P(A)$. 
	Moreover, since $\gr^n\Eis_P(e^M)=e\vert_{\gr^n\Eis_P(A)}$, it also factors through $\Fil^{n+1}\Eis_P(A)$. 
	Thus, we have $d^2=0$. 
	Since $e$ is idempotent, we have $e^2=e$ and $(d+e)^2=d+e$ in $\End(\Fil^n\Eis_P(A))$. 
	Since $e$ lies in the center of $\End(\Fil^n\Eis_P(A))$, we have $d=2de$. 
	By multiplying $e$, we have $de=0$ and so $d=0$. 
	Thus, we have the equality $\Fil^n \Eis_P(e^M)=e\vert_{\Fil^n \Eis_P(A)}$. 
\end{proof}

We apply this to the decomposition with respect to cuspidal supports. 
From now on, assume that $\Lambda=\Qla$ as in \Cref{ssc:specEis}. 
For each conjugacy class $[M]$ of Levi subgroups of $G$, let $e_{[M]}$ be the idempotent in $\cl{Z}^\spec(G,\Qla)$ corresponding to the closed and open substack $\Par_{\LG,[M]}$ of $\Par_\LG$. The subcategory of $\cl{D}_\lis(\Bun_G,\Qla)$ fixed by $e_{[M]}$ is denoted by $\cl{D}_\lis(\Bun_G,\Qla)_{[M]}$. 
It induces a decomposition 
$\cl{D}_\lis(\Bun_G,\Qla) = \bigoplus_{[M]} \cl{D}_\lis(\Bun_G,\Qla)_{[M]}$. 
First, \Cref{cor:idemcompat} implies the invariance of cuspidal supports along geometric Eisenstein series. 

\begin{prop}
	The functor $\Eis_P$ sends $\cl{D}_\lis(\Bun_M,\Qla)_{[M]}$ into $\cl{D}_\lis(\Bun_G,\Qla)_{[M]}$. 
\end{prop}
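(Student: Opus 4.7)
The plan is to reduce the statement directly to \Cref{cor:idemcompat}, so the only real work is identifying the restricted idempotent $e_{[M]}^M \in \cl{Z}^\spec(M,\Qla)$ as precisely the idempotent cutting out $\cl{D}_\lis(\Bun_M,\Qla)_{[M]}$.

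First, I would unwind what $e_{[M]}^M$ means. By definition, the map $\cl{Z}^\spec(G,\Qla) \to \cl{Z}^\spec(M,\Qla)$ sending $f$ to $f^M$ is the pullback along the forgetful map $\Par_{\LM} \to \Par_{\LG}$. Since $e_{[M]}$ corresponds to the idempotent function supported on the closed-open substack $\Par_{\LG,[M]}$, its pullback $e_{[M]}^M$ corresponds to the preimage of $\Par_{\LG,[M]}$ in $\Par_{\LM}$. I would then check that this preimage equals $\Par_{\LM,[M]}$: an $L$-parameter of $M$ has its $M$-cuspidal support given by some Levi $M' \subset M$, and its $G$-cuspidal support is the $G$-conjugacy class $[M']_G$. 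The condition $[M']_G = [M]$ forces $M' = M$ by a rank comparison since $M' \subset M$, so the preimage is exactly $\Par_{\LM,[M]}$. Hence $e_{[M]}^M$ is the idempotent whose fixed-point subcategory is $\cl{D}_\lis(\Bun_M,\Qla)_{[M]}$.

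Given this identification, the proof is immediate. For $A \in \cl{D}_\lis(\Bun_M,\Qla)_{[M]}$, we have $e_{[M]}^M|_A = A$. Applying \Cref{cor:idemcompat} to the idempotent $e_{[M]}$ yields
\[
    e_{[M]}|_{\Eis_P(A)} \;=\; \Eis_P\bigl(e_{[M]}^M|_A\bigr) \;=\; \Eis_P(A),
\]
so $\Eis_P(A)$ lies in $\cl{D}_\lis(\Bun_G,\Qla)_{[M]}$.

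The main (mild) obstacle is the bookkeeping around cuspidal supports: one must make sure the cuspidal support of an $L$-parameter viewed through two different groups behaves as expected, and that the scheme-theoretic preimage of $\Par_{\LG,[M]}$ really coincides with the corresponding union of components of $\Par_{\LM}$ rather than being something smaller. Both facts follow from the transitivity of cuspidal supports and from the Levi rank argument above; no further input beyond \Cref{cor:idemcompat} is needed.
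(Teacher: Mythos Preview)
Your proof is correct and follows exactly the approach the paper intends: the paper states the proposition immediately after \Cref{cor:idemcompat} with the one-line justification that it ``implies the invariance of cuspidal supports along geometric Eisenstein series,'' and your argument is precisely the unpacking of that sentence. Note that for the proposition you only need the containment $\Par_{\LM,[M]} \subset$ preimage of $\Par_{\LG,[M]}$ (so that $e_{[M]}^M$ acts as the identity on $\cl{D}_\lis(\Bun_M,\Qla)_{[M]}$), and this is exactly what the paper records just before the proposition when it says $\Par^\ssi_{\LG,[M]}$ is the image of $\Par^\ssi_{\LM,[M]}\to \Par^\ssi_\LG$; your rank argument for the reverse containment is correct but not needed here.
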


The induced functor from $\cl{D}_\lis(\Bun_M,\Qla)_{[M]}$ to $\cl{D}_\lis(\Bun_G,\Qla)_{[M]}$ is denoted by $\Eis_{P,[M]}$. 
As shown in \Cref{prop:esssurj}, the spectral counterparts $\Eis^\spec_{P,[M]}$ for all $P$ generate the target category $\IndCoh(\Par_{\LG,[M]})$. 
Thus, the compatibility of Eisenstein series predicts that the functors $\Eis_{P,[M]}$ for all $P$ also generate the target category $\cl{D}_\lis(\Bun_G,\Qla)_{[M]}$. 
This expectation can be phrased differently in terms of cuspidality. 

\begin{defi}
	The subcategory of $\cl{D}_\lis(\Bun_G,\Qla)$ consisting of the objects $A$ such that $\CT_P(A)=0$ for every proper parabolic subgroup $P$ of $G$ is denoted by $\cl{D}_\lis(\Bun_G,\Qla)_\cusp$. Its objects are called cuspidal. 
\end{defi}

Cuspidal sheaves are supported on $B(G)_\bas$ since every stalk at a non-basic $\sigma$-conjugacy class is written as a direct summand of a geometric constant term and thus vanishes. Thus, we can regard cuspidal sheaves as families of representations of $G_b(E)$ over $b\in B(G)_\bas$. We are going to study what kind of representations are cuspidal in this sense. 

For every $A\in \cl{D}_\lis(\Bun_G,\Qla)_{[G]}$, we see that $\Hom(\Eis_{\ov{P}}(\CT_P(A)),A)=0$ for every proper parabolic subgroup $P$ of $G$ since the projection of $\Eis_{\ov{P}}(\CT_P(A))$ to $\cl{D}_\lis(\Bun_G,\Qla)_{[G]}$ is zero. In particular, we have $\CT_P(A)=0$ for every $P$, so $A$ is cuspidal. Let us see that the essential surjectivity of $\Eis_{P,[M]}$ for all $P$ implies the converse direction. Suppose $A$ is cuspidal and lies in $\cl{D}_\lis(\Bun_G,\Qla)_{[M]}$ for a Levi subgroup $M$ of $G$. The hypothesis on the essential surjectivity implies that there is an object $B\in \cl{D}_\lis(\Bun_M,\Qla)_{[M]}$ such that $\Hom(\Eis_P(B),A)\neq 0$ for some parabolic subgroup $P$ with a Levi subgroup $M$. The second adjointness implies $\Hom(B,\CT_\ov{P}(A))\neq 0$, but it contradicts the cuspidality of $A$. Thus, we arrive at the following conjecture. 

\begin{conj}\label{conj:cusp}
	The inclusion $\cl{D}_\lis(\Bun_G,\Qla)_{[G]} \subset \cl{D}_\lis(\Bun_G,\Qla)_\cusp$ is an equality. In particular, for any smooth irreducible $\Qla$-representation $\pi$ of $G_b(E)$ with $b\in B(G)_\bas$, its Fargues--Scholze parameter $\varphi^\FS_\pi$ is supercuspidal if and only if $\CT_P(i^b_{!}\pi)=0$ for every proper parabolic subgroup $P$ of $G$. 
\end{conj}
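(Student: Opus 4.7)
The plan is to treat the two inclusions separately: the forward one follows from the results already developed in the paper, while the reverse is genuinely open and the sketch below is only a speculative route.

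For $\cl{D}_\lis(\Bun_G,\Qla)_{[G]}\subset\cl{D}_\lis(\Bun_G,\Qla)_\cusp$, fix $A$ with $e_{[G]}\vert_A=A$ and a proper parabolic $P$ with Levi $M$. The restriction $e_{[G]}^M\in\cl{Z}^\spec(M,\Qla)$ vanishes, since $G$ does not occur as a Levi subgroup of the proper Levi $M$, so the connected component of $\Par^\ssi_{{}^LM,[G]}$ is empty. Applying \Cref{cor:idemcompat} to $\Eis_{\ov P}$, for any $B\in\cl{D}_\lis(\Bun_M,\Qla)$ one has
\[\Hom(\Eis_{\ov P}(B),A)=\Hom(\Eis_{\ov P}(B),e_{[G]}\vert_A)=\Hom(e_{[G]}\vert_{\Eis_{\ov P}(B)},A)=\Hom(\Eis_{\ov P}(e_{[G]}^M\vert_B),A)=0.\]
Setting $B=\CT_P(A)$ and invoking the second adjointness \Cref{thm:unEis} (with $\Eis_{\ov P}$ left adjoint to $\CT_P$), this specialises to $\Hom(\CT_P(A),\CT_P(A))=0$, hence $\CT_P(A)=0$ for every proper $P$ and $A$ is cuspidal.

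For the converse, first note that each summand $A_{[M]}=e_{[M]}\vert_A$ of a cuspidal $A$ is again cuspidal: the same manipulation, $\Hom(B,\CT_P(A_{[M]}))=\Hom(\Eis_{\ov P}(e_{[M]}^M\vert_B),A)=\Hom(e_{[M]}^M\vert_B,\CT_P(A))=0$, shows this, so it suffices to prove that any cuspidal $A\in\cl{D}_\lis(\Bun_G,\Qla)_{[M]}$ with $[M]\neq[G]$ vanishes. Paralleling the spectral argument of \Cref{prop:esssurj}, the natural reformulation is the essential generation statement: $\cl{D}_\lis(\Bun_G,\Qla)_{[M]}$ is generated, under colimits and shifts, by the essential images of $\Eis_{P,[M]}$ as $P$ ranges over parabolics with Levi in $[M]$. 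Once this is established, a nonzero cuspidal $A$ would admit a nonzero map from some $\Eis_P(B)$ with $B\in\cl{D}_\lis(\Bun_M,\Qla)_{[M]}$, and the second adjointness would give $0\neq\Hom(B,\CT_{\ov P}(A))$, contradicting cuspidality.

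The main obstacle is precisely this essential generation step. On the spectral side it is forced by the Jordan decomposition \Cref{lem:Jordan}, which ensures that every closed point of $\Sing(\Par_{\LG,[M]})$ lies in some $\Sing(p^\spec)^{-1}(\Nilp_P)$; no such structural input is currently available on the automorphic side, and the decomposition of $\Bun_P$ along $B(M)$ from \Cref{ssc:BunG} only sees generation stratum by stratum. The most plausible substitute, suggested by the case analyses in \Cref{sssc:GLn} and \Cref{sssc:GSp4}, is to apply Hecke operators to a hypothetical cuspidal counterexample $A$: choose a representation of $\LG$ whose restriction to $\LM$ contains a summand factoring through the centre of $\widehat M$, and argue via \Cref{thm:compat} that the resulting Hecke transform admits a nonzero constituent whose spectral action factors through $\cl{Z}^\spec(M,\Qla)$; identify that constituent with an object of $\operatorname{im}(\Eis_{P,[M]})$ using \Cref{lem:explicit} on the basic strata and an excision argument modelled on the proof of \Cref{prop:adjlis}. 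The two hard points along this route are (i) sharpening \Cref{thm:compat} from a finite-filtration statement to a genuine commutation on the $[M]$-summand, without which the Hecke composition factors are only controlled up to a subquotient, and (ii) controlling the interaction of Hecke correspondences with the local charts $\cl{M}_b$ that govern the induction in \Cref{prop:ind}.
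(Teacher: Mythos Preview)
Your treatment is essentially the same as the paper's. Both you and the paper establish the inclusion $\cl{D}_\lis(\Bun_G,\Qla)_{[G]}\subset\cl{D}_\lis(\Bun_G,\Qla)_\cusp$ by the identical mechanism: for $A$ in the $[G]$-summand and proper $P$, the object $\Eis_{\ov P}(\CT_P(A))$ has vanishing $[G]$-component (equivalently, $e_{[G]}^M=0$ and \Cref{cor:idemcompat} applies), so $\Hom(\Eis_{\ov P}(\CT_P(A)),A)=0$, whence $\CT_P(A)=0$ by second adjointness. Both you and the paper then identify the reverse inclusion as equivalent to the essential generation of $\cl{D}_\lis(\Bun_G,\Qla)_{[M]}$ by the functors $\Eis_{P,[M]}$, and leave that open; this is precisely why the statement is recorded as a conjecture rather than a theorem. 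Your speculative Hecke-operator route for the converse is in the spirit of the case analyses the paper carries out for $\GL_n$ and $\GSp_4$, and you correctly flag the two genuine obstacles (sharpening \Cref{thm:compat} beyond a filtration statement, and controlling Hecke operators on the local charts).
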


What is remarkable here is that in the usual representation theory, $\varphi^\FS_\pi$ is not necessarily supercuspidal even if $\pi$ itself is supercuspidal. This is easier to see when $G_b$ is anisotropic, but it is also the case when $G_b=\GSp_4$. This difference expresses the fact that geometric Eisenstein series can generate those representations which cannot be generated by representation-theoretic parabolic inductions. To grasp the behavior of geometric Eisenstein series, we will study two concrete examples $G=\GL_n$ and $G=\GSp_4$ in the following.

Through these examples, we observe that those representations generated by geometric Eisenstein series are mainly  generated by a combination of representation-theoretic parabolic inductions and Hecke operators. The relation between the above conjecture and the characterization of supercuspidal $L$-packets in \cite[\S 3.5]{DR09}  can be explained through this observation. 
For every smooth irreducible $\Qla$-representation $\pi$, we can expect from the categorical local Langlands correspondence that each member of the $L$-packet containing $\pi$ appears as a subquotient of $T_V\pi$ for some $V\in \Rep(\LG)$. The expectation in \cite[\S 3.5]{DR09} is that the $L$-packet containing $\pi$ consists only of supercuspidal representations if and only if the $L$-parameter of $\pi$ is supercuspidal. Thus, if $\varphi_\pi^\FS$ is not supercuspidal, then it is expected that a non-supercuspidal representation in the $L$-packet containing $\pi$ appears as a subquotient of $T_V\pi$ and it implies that a geometric constant term of $i^b_!\pi$ is nonzero. 

\subsubsection{The case $G=\mrm{GL}_n$}\label{sssc:GLn}

This case enjoys a particular property that for every supercuspidal irreducible $\Qla$-representation $\pi$ of $\GL_n(E)$, the $L$-packet containing $\pi$ is a singleton and its $L$-parameter is supercuspidal. In particular, the above conjecture does not claim much about smooth representations of $\GL_n(E)$. We deduce the claim for extended pure inner forms $G_b(E)$ from the following conjectural property of the compatibility of geometric Eisenstein series with Hecke operators. 

Let $P$ be a parabolic subgroup of $G$ with a Levi subgroup $M$ and take $V\in \Rep (\LG)$. Let $U$ be the unipotent radical of $P$. 

\begin{conj}\textup{(\cite[Conjecture 1.5.2]{Han24})}\label{conj:compat}
	Choose a finite filtration $0=V_0\subset V_1\subset \cdots \subset V_m=V\vert_{\LP}$ such that $\LU$-action on each graded piece $W_i=V_i/V_{i-1}$ is trivial, i.e. such that each $W_i$ is naturally inflated from $\Rep(\LM)$. Then, $T_V\Eis_P(-)$ admits a corresponding finite filtration with graded pieces $\Eis_P(T_{W_i}-)$. 
\end{conj}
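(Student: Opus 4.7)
The plan is to construct the desired filtration by combining a Hecke-equivariant lift of the Eisenstein correspondence with the parabolic compatibility of Fargues--Scholze's geometric Satake. Concretely, consider the local Hecke stacks $\mrm{Hecke}_G$, $\mrm{Hecke}_P$, $\mrm{Hecke}_M$ parametrising single-leg modifications of $G$-, $P$-, $M$-bundles. The natural maps $\mrm{Hecke}_P\to\mrm{Hecke}_G$ (via $P\hookrightarrow G$) and $\mrm{Hecke}_P\to\mrm{Hecke}_M$ (via $P\twoheadrightarrow M$) assemble into a commutative diagram over the Eisenstein correspondence $\Bun_G\leftarrow\Bun_P\to\Bun_M$. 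Pulling $\cl{S}_V$ back from $\mrm{Hecke}_G$ to $\mrm{Hecke}_P$ and applying base change rewrites $T_V\Eis_P(-)$ as a pushforward from $\mrm{Hecke}_P$ of a convolution involving $\cl{S}_V\vert_{\mrm{Hecke}_P}$.

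The central input needed is what one might call the \emph{parabolic compatibility of Fargues--Scholze geometric Satake}: for $V\in\Rep(\LG)$, the pullback $\cl{S}_V\vert_{\mrm{Hecke}_P}$ should carry a canonical finite filtration whose associated graded pieces are pulled back from $\mrm{Hecke}_M$ along $\cl{S}_{W_i}$, with $W_i=V_i/V_{i-1}$ as in the statement. This is the Fargues--Scholze analogue of the Braverman--Gaitsgory / Mirković--Vilonen picture, where the filtration arises from hyperbolic localisation along the central cocharacter of $Z(\widehat{M})$ determining $P$, combined with the stratification of $\mrm{Gr}_P^{\mrm{BdR}}$ by its connected components indexed by $\pi_1(M)$. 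Once this input is in place, the induced filtration on $\cl{S}_V\vert_{\mrm{Hecke}_P}$ transports through base change and yields a filtration on $T_V\Eis_P(-)$ whose subquotients, after matching the normalising $K_{\Bun_P}^{1/2}$ twists, are the functors $\Eis_P\circ T_{W_i}(-)$.

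As a complementary strategy, one could try to apply the induction principle of \Cref{prop:ind} to the property $\Psi_{(G,P,b)}$ asserting the existence of such a filtration for every $A\in\cl{D}_\lis(\Bun_M^{\preceq b},\Qla)$. Associativity and excisiveness would follow formally from the associativity of $\Eis_P$ recorded in \Cref{ssc:BunG}, the excision filtration used in the proof of \Cref{thm:compat}, and the naturality of $T_V$ in its input. The basic cases then become: (1) $\nu_b$ strictly $P$-codominant, where $\Eis_P\vert_{\Bun_M^b}=i^b_!$ by \Cref{lem:explicit}(1), so the claim reduces to compatibility of $T_V$ with open-closed decompositions; (2) $b$ basic in $G$, where $\Eis_P\vert_{\Bun_M^b}$ is the classical parabolic induction $\Ind^{G_b}_{P_b}$ by \Cref{lem:explicit}(2), and the assertion reduces to a representation-theoretic statement about Hecke operators commuting up to filtration with parabolic induction; (3) $\nu_b$ strictly $P$-dominant, handled via the local charts $\cl{M}_b$ of \cite{FS21} exactly as in case (1) using the cohomological smoothness of $\mfr{p}_b$.

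The main obstacle is establishing the parabolic compatibility of Satake sheaves in the Fargues--Scholze setting. While the classical analogue on affine Grassmannians of groups over curves is well-documented, translating the Mirković--Vilonen theorem and the geometry of semi-infinite orbits to $\mrm{Gr}_G^{\mrm{BdR}}$ requires care, particularly in controlling factorisation behaviour across several legs and the interplay of the $\pi_1$-grading with hyperbolic localisation in the diamond world. Note that even in the induction-principle approach above, case (2) eventually calls for a Satake-theoretic input controlling how smooth-representation Hecke operators interact with parabolic induction, so the two strategies converge on the same essential difficulty. A secondary bookkeeping obstacle is ensuring that the canonical twists $K_{\Bun_P}^{1/2}$ entering \Cref{defi:normalization} are propagated correctly through the filtration so that the subquotients come out as the \emph{normalised} $\Eis_P(T_{W_i}-)$ rather than their unnormalised cousins.
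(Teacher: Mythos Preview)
The statement you are addressing is \Cref{conj:compat}, which is explicitly a \emph{conjecture} in the paper, not a theorem. The paper does not prove it; immediately after stating it, the paper only remarks that the case where $G_{\mrm{der}}$ is simply connected and $P$ is a Borel is established in \cite{Ham23} with torsion coefficients, following the classical Braverman--Gaitsgory argument, and that the general case is expected to go through with a suitable sheaf theory. There is therefore no proof in the paper to compare against.

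That said, your first strategy (lifting the Eisenstein correspondence to Hecke stacks and invoking a parabolic compatibility of geometric Satake on $\mrm{Gr}^{\mrm{BdR}}$) is exactly the Braverman--Gaitsgory approach the paper alludes to, and you correctly identify the main missing ingredient: the Mirković--Vilonen/hyperbolic-localisation filtration on $\cl{S}_V\vert_{\mrm{Hecke}_P}$ in the diamond setting. Your second strategy, attempting to apply the induction principle of \Cref{prop:ind}, is not pursued in the paper and runs into a genuine difficulty you partly acknowledge: in basic case (2), the assertion becomes a statement about how the Hecke operators $T_V$ interact with representation-theoretic parabolic induction $\Ind^{G_b}_{P_b}$, but $T_V$ does not preserve the stratum $\Bun_G^b$, so the reduction is not purely representation-theoretic and still requires Satake-level input of the same nature. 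In other words, the induction principle does not genuinely bypass the core obstacle; both routes converge on the same unresolved point, as you yourself note at the end.
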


When the derived subgroup of $G$ is simply connected and $P$ is a Borel subgroup, this is proved in \cite{Ham23} with torsion coefficients along the same line as in the classical geometric case in \cite{BG02}. As noted in \cite{Han24}, it is expected that the general case can be handled in the same way with a suitable sheaf theory. 

\Cref{conj:compat} implies that the essential images of geometric Eisenstein series are stable under Hecke operators. Thus, for every $A\in \cl{D}_\lis(\Bun_G,\Qla)_\cusp$ and $B\in \cl{D}_\lis(\Bun_M,\Qla)$, we have 
$\Hom(B,\CT_{\ov{P}}(T_V A))=\Hom(T_{V^\vee}\Eis_P(B),A) =0$
since $T_V$ is left adjoint to $T_{V^\vee}$. Thus, we see that $\cl{D}_\lis(\Bun_G,\Qla)_\cusp$ is also stable under Hecke operators. 

Now, suppose that we have a nonzero cuspidal sheaf $A\in \cl{D}_\lis(\Bun_G,\Qla)_\cusp$ which lies in $\cl{D}_\lis(\Bun_G,\Qla)_{[M]}$ for some proper Levi subgroup $M$. We may assume that $A$ is supported on a basic $\sigma$-conjugacy class $b\in B(G)_\bas$. Take $V\in \Rep(\LG)$ so that its central character corresponds to $b$ under the Kottwitz isomorphism $X^*(Z(\widehat{G})^\Gamma)\cong B(G)_\bas$. By replacing $A$ with $T_{V^\vee} A$, we may suppose that $A$ is supported at $b=1$. Since the Jacquet functor is exact, every cohomology group of $A$ is a smooth representation of $\GL_n(E)$ whose Jacquet modules are all zero. In particular, every subquotient of a cohomology group of $A$ is supercuspidal, but it also lies in $\cl{D}_\lis(\Bun_G,\Qla)_{[M]}$. This is a contradiction. In particular, we show the following. 

\begin{prop}
	When $G=\GL_n$, \Cref{conj:cusp} follows from \Cref{conj:compat}. 
\end{prop}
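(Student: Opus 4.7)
The plan is to prove the missing inclusion $\cl{D}_\lis(\Bun_G,\Qla)_\cusp \subseteq \cl{D}_\lis(\Bun_G,\Qla)_{[G]}$, since the reverse inclusion was established in the paragraph preceding \Cref{conj:cusp} and the representation-theoretic reformulation then follows formally. The argument has three steps: Hecke-stability of the cuspidal subcategory, a Hecke translation to the trivial basic stratum, and an appeal to the classical fact that supercuspidals of $\GL_n(E)$ have supercuspidal $L$-parameters.

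For Hecke-stability, granting \Cref{conj:compat} one knows that $T_{V^\vee}\Eis_P$ admits a finite filtration with associated graded pieces of the form $\Eis_P \circ T_{W_i^\vee}$. For $A$ cuspidal and any $B \in \cl{D}_\lis(\Bun_M,\Qla)$, combining the second adjointness of \Cref{thm:unEis} with the standard adjunction $(T_V,T_{V^\vee})$ yields
\[
\Hom\bigl(B,\CT_{\ov{P}}(T_V A)\bigr) \cong \Hom\bigl(T_{V^\vee}\Eis_P(B),A\bigr),
\]
and a d\'evissage along the filtration reduces the right-hand side to vanishing $\Hom(\Eis_P(T_{W_i^\vee}B),A) = \Hom(T_{W_i^\vee}B,\CT_{\ov{P}}(A)) = 0$. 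Hence $T_V A$ is again cuspidal for every $V$.

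Now suppose for contradiction that $A \in \cl{D}_\lis(\Bun_G,\Qla)_\cusp$ is nonzero and lies in $\cl{D}_\lis(\Bun_G,\Qla)_{[M]}$ for some proper Levi subgroup $M$. Since $\CT_{P'}(A)=0$ for every proper $P'$, the stalk of $A$ at each non-basic stratum vanishes: indeed, whenever $\nu_{b'}$ is strictly $P'$-codominant the functor $i^{b'\ast}$ coincides with $\CT^\un_{\ov{P'}}$ up to a twist by \Cref{lem:explicit}(1), so $A$ is supported on $B(G)_\bas$ and we may assume it is supported on a single basic stratum $b$. Using the Kottwitz identification $B(\GL_n)_\bas \cong \bb{Z}$, choose $V \in \Rep(\LG)$ with central character $b$; replacing $A$ by $T_{V^\vee}A$, which remains nonzero, still lies in $(\cdot)_{[M]}$ (Hecke operators commute with the spectral action) and is cuspidal by the previous step, we may assume $b=1$. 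Then each cohomology sheaf of $A$ is a smooth $\GL_n(E)$-representation whose Jacquet module along every proper parabolic vanishes (using the representation-theoretic description of $\CT_P$ on $\Bun_G^1$ from \Cref{lem:repsec}), hence a direct sum of supercuspidals. For $\GL_n$ every irreducible supercuspidal has supercuspidal Fargues--Scholze parameter, so these constituents lie in $\cl{D}_\lis(\Bun_G,\Qla)_{[G]}$, contradicting $A \in \cl{D}_\lis(\Bun_G,\Qla)_{[M]}$ with $[M]\ne[G]$. The most delicate point is the Hecke translation: one must verify that the chosen $V$ does not kill the stalk of $A$ at the trivial stratum, which rests on the compatibility of Hecke operators with the Kottwitz grading on $B(G)_\bas$ together with Mantovan-type local shtuka descriptions of the stalks.
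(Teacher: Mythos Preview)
Your argument is correct and follows the paper's own proof essentially line for line: Hecke-stability of the cuspidal category via \Cref{conj:compat} and second adjointness, reduction to a single basic stratum, translation to $b=1$ by a Hecke operator, and the classical fact that supercuspidals of $\GL_n(E)$ have supercuspidal Fargues--Scholze parameters. The one place you overcomplicate things is the non-vanishing of $T_{V^\vee}A$: no Mantovan-type stalk analysis is needed, since $T_V T_{V^\vee}A \cong T_{V\otimes V^\vee}A$ contains $A$ as a direct summand via the trivial summand of $V\otimes V^\vee$ (the paper records exactly this argument in the $\GSp_4$ discussion), and the support claim follows because Hecke operators shift the Kottwitz degree by the central character of $V$.
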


\subsubsection{The case $G=\GSp_4$ over $E=\bb{Q}_p$}\label{sssc:GSp4}
Let $\varphi\colon W_{\Qp}\to \GSp_4(\Qla)$ be an $L$-parameter for $\GSp_4$. In this case, there are two kinds of non-supercuspidal $L$-parameters $\varphi$ whose $L$-packets $\Pi_\varphi$ contain supercuspidal representations. They are called local Saito--Kurokawa and local Howe--Piatetski-Shapiro $L$-parameters. First, we study the case of local Saito--Kurokawa type following the work announced in \cite{Mie21} and the explanation in \cite[Example 3.3.2]{Han24}. 

Let $\pi$ be an irreducible smooth supercuspidal $\Qla$-representation of $\GSp_4(\Qp)$ whose $L$-parameter $\varphi$ is of local Saito--Kurokawa type. 
Let $D$ be the quaternion division algebra over $\Qp$ and consider the inner form $J=\mrm{GU}_2(D)$ of $G=\GSp_4(\Qp)$. According to the local Langlands correspondence for $G$ in \cite{GT11} and for $J$ in \cite{GT14}, both of the $L$-packets $\Pi_\varphi^G$ and $\Pi_\varphi^J$ of $\varphi$ for $G$ and $J$, respectively, consist of one supercuspidal representation and one non-supercuspidal discrete representation. Let $\rho$ be the non-supercuspidal discrete representaion of $J(\Qp)$ in $\Pi_\varphi^J$. The work of Ito--Mieda announced in \cite{Mie21} calculates the supercuspidal part of the $\rho$-isotypic component of the $\ell$-adic cohomology of the basic Rapoport-Zink space for $\GSp_4$ and finds the nontrivial contribution of $\pi$ there. We will review this result more precisely as follows. 

The standard representation of $\GSp_4$ is denoted by $\std\colon \GSp_4 \hookrightarrow \GL_4$. Let $b\in B(G)_\bas$ be the basic $\sigma$-conjugacy class corresponding to the central character of $\std$ and let $\mu$ be the highest weight of $\std$. The local Shimura datum $(G,b,\mu)$ and a compact open subgroup $K\subset G(\Qp)$ define a local Shimura variety $\Sht_{(G,b,\mu,K)}$ at the level $K$, also known as a Rapoport-Zink space. Consider the $\ell$-adic \'{e}tale cohomology of the tower $\{\Sht_{(G,b,\mu,K)}\}_K$
\[R\Gamma_c(G,b,\mu) = \colim_K R\Gamma_c(\Sht_{(G,b,\mu,K)},\Qla[d](\tfrac{d}{2}))\]
with $d=3$ the dimension of a local Shimura variety $\Sht_{(G,b,\mu,K)}$. 
It is a representation of $G(\Qp)\times J(\Qp) \times W_\Qp$ and its $\rho$-isotypic component
$\RHom_{J(\Qp)}(R\Gamma_c(G,b,\mu),\rho)^\sm$
is denoted by $R\Gamma(G,b,\mu)[\rho]$. Here, $(-)^\sm$ denotes the operation restricting to the set of smooth $G(\Qp)$-vectors, i.e. $(-)^\sm=\colim_K (-)^K$. 
The $\rho$-isotypic component $R\Gamma(G,b,\mu)[\rho]$ is a representation of $G(\Qp)\times W_\Qp$ and as in \cite[Section IX.3]{FS21}, it can be described as $R\Gamma(G,b,\mu)[\rho]=i^{1*}T_{\std^\vee}i^b_{*} \rho$. 
Now, we explain the computation of $R\Gamma(G,b,\mu)[\rho]$ by Ito--Mieda. Here, we do not consider the action of $W_\Qp$ which is treated in their computations. 

\begin{prop}\textup{(\cite[Theorem 3.1]{Mie21})}\label{prop:ItoMieda}
	The supercuspidal part of $R\Gamma(G,b,\mu)[\rho]$ is concentrated on degree $0$ and isomorphic to $\pi^{\oplus 2}$ as a representation of $G(\Qp)$. 
\end{prop}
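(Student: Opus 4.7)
The plan is to compute $R\Gamma(G,b,\mu)[\rho] = i^{1*}T_{\std^\vee}i^b_{*}\rho$ via a global argument, using a Mantovan-type formula together with the known description of the cohomology of the Siegel threefold in the local Saito--Kurokawa case. The basic strategy is to globalize $\pi$ and $\rho$ to automorphic representations of $\mrm{GSp}_4$ and its inner form $\mrm{GU}_2(D)$ over a totally real field, express $R\Gamma(G,b,\mu)[\rho]$ as a local factor of the cohomology of the associated Shimura variety, and then extract the contribution of the global Saito--Kurokawa Arthur packet.

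First I would globalize: choose a totally real number field $F$ with a single prime $v$ above $p$ satisfying $F_v=\Qp$, and globalize $\varphi$ to a global $A$-parameter of Saito--Kurokawa type attached to a cuspidal automorphic representation of $\mrm{PGL}_2/F$. By the known construction of the Saito--Kurokawa lift and the local-global Jacquet--Langlands correspondence for $\mrm{GSp}_4$ versus its inner form, we obtain global automorphic representations $\Pi$ of $\mrm{GSp}_4(\mathbb{A}_F)$ and $P$ of $\mrm{GU}_2(D)(\mathbb{A}_F)$ that realize $\pi$ and $\rho$ at $v$ and agree at every other place. Next I would apply Mantovan's formula (or its stacky refinement via the spectral action of \cite{FS21}): the $\Pi^{p,\infty}$-isotypic component of $R\Gamma(G,b,\mu)[\rho]$, tensored against the prime-to-$p$ part of $P$, appears as a direct summand of the $\pi_f$-isotypic cohomology of the Siegel modular threefold in degree determined by the normalization $[d](d/2)$.

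The next step is to invoke the endoscopic / Arthur-theoretic description of the cohomology of the Siegel threefold in the Saito--Kurokawa case, due to Weissauer, Sorensen, and others: the $\Pi$-isotypic component in middle degree coincides, after removing the contributions from proper parabolic inductions (i.e. the Eisenstein-type summands coming from the CAP nature of $\Pi$), with a space whose $G(\Qp)\times J(\Qp)$-representation is governed by the Arthur multiplicity formula for the Saito--Kurokawa packet. Extracting the supercuspidal part kills those Eisenstein contributions, and one obtains a pairing of the two members of $\Pi_\varphi^G$ against the two members of $\Pi_\varphi^J$; projecting onto $\rho$ on the $J$-side and onto the supercuspidal member $\pi$ on the $G$-side, the Arthur multiplicity yields the space $\pi$ with a multiplicity predicted by the character sums on both packets.

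The main obstacle will be the precise bookkeeping of multiplicities and the identification of the multiplicity $2$. On one hand, the Saito--Kurokawa packet has a non-trivial central character of the component group, and the interaction between the packets on $G$ and $J$ via endoscopic transfer produces delicate sign cancellations; on the other hand, one must verify that all non-supercuspidal contributions indeed arise from geometric Eisenstein series, so that passing to the supercuspidal part removes them cleanly. The concentration in degree $0$ should follow from the general principle that supercuspidal representations contribute only to the middle-degree cohomology of the basic Rapoport--Zink tower (combined with purity/weight arguments), which after the shift $[d](d/2)$ places them in cohomological degree $0$; making this rigorous in the Saito--Kurokawa setting requires input from the vanishing theorems of Caraiani--Scholze or their $\GSp_4$-analogue, together with the exactness of the global-to-local comparison at the relevant place.
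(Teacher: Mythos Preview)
The paper does not prove this proposition at all: it is quoted as \cite[Theorem 3.1]{Mie21}, a result of Ito--Mieda announced in that reference, and the surrounding text only uses it as a black box input to show that $i^1_!\pi$ lies in $\cl{D}_\lis(\Bun_G,\Qla)^\Eis$. There is therefore no ``paper's own proof'' to compare your proposal against.

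That said, your outline is broadly the right genre of argument for results of this type --- globalize, compare with the cohomology of the Siegel threefold via a Mantovan-type formula, and invoke the known endoscopic description of that cohomology in the Saito--Kurokawa case --- and this is in the spirit of what Ito--Mieda do. But as written it is only a plan, not a proof: you have not actually carried out the multiplicity computation that yields exactly $2$, nor justified the degree-$0$ concentration beyond a heuristic. If you wish to give an independent argument rather than cite \cite{Mie21}, you would need to make precise (i) the globalization step (existence of a suitable $\Pi$ with prescribed local components and controlled behavior at the archimedean and auxiliary places), (ii) the exact version of the Mantovan/basic-uniformization formula you use and why it isolates $R\Gamma(G,b,\mu)[\rho]$ on the nose rather than up to non-supercuspidal error, and (iii) the Arthur--Weissauer bookkeeping that produces the multiplicity $2$. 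As it stands, citing \cite{Mie21} is the honest move, which is exactly what the paper does.
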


Assuming \Cref{conj:compat} and this computation, we will show that $i^1_!\pi$ is not cuspidal. For this, it is enough to show that $\pi$ lies in the subcategory $\cl{D}_\lis(\Bun_G,\Qla)^\Eis$ of $\cl{D}_\lis(\Bun_G,\Qla)$ generated by the essential images of $\Eis_P$ for all proper parabolic subgroups $P$. Since $\cl{D}_\lis(\Bun_G,\Qla)^\Eis$ contains all lisse-\'{e}tale sheaves supported on non-basic $\sigma$-conjugacy classes, it is enough to keep track of the restriction to the semistable locus. 
First, since $\rho$ is non-supercuspidal, it has a resolution by parabolically induced representations, so we see that $i^b_{!}\rho$ lies in $\cl{D}_\lis(\Bun_G,\Qla)^\Eis$ and so does $i^b_{*}\rho$. Then, we see from \Cref{conj:compat} that $\cl{D}_\lis(\Bun_G,\Qla)^\Eis$ is stable under Hecke operators, so $R\Gamma(G,b,\mu)[\rho]=i^{1*}T_{\std^\vee}i^b_{*} \rho$ also lies in $\cl{D}_\lis(\Bun_G,\Qla)^\Eis$. Ito--Mieda's computation implies that it contains $\pi$ as a direct summand, so it follows that $\pi$ lies in $\cl{D}_\lis(\Bun_G,\Qla)^\Eis$. 
Summarizing, we show assuming \Cref{conj:compat} and \Cref{prop:ItoMieda} that for any supercuspidal representation $\pi$ of $\GSp_4(\Qp)$ of local Saito--Kurokawa type, $i^1_{!}\pi$ lies in $\cl{D}_\lis(\Bun_G,\Qla)^\Eis$ and in particular, it is not cuspidal. 

Now, we turn to the general case. Let $\pi$ be an irreducible smooth supercuspidal $\Qla$-representation of $\GSp_4(\Qp)$ with a non-supercuspidal $L$-parameter $\varphi$. Let $\rho$ be a non-supercuspidal representation in the $L$-packet $\Pi_\varphi^J$ (which is not unique if $\varphi$ is of Howe--Piatetski-Shapiro type). In general, exact computations such as \Cref{prop:ItoMieda} are not available in the literature. However, in the above argument, we only need the fact that $\pi$ appears as a direct summand of $R\Gamma(G,b,\mu)[\rho]$. Let $Z$ be the center of $G$. Since $J$ is an inner form of $G$, $Z$ can be identified with the center of $J$. Since the diagonal action of $Z(\Qp)$ on $R\Gamma_c(G,b,\mu)$ is trivial and $\rho$ has a central character $\omega_\rho$, the action of $Z(\Qp)$ on $R\Gamma(G,b,\mu)[\rho]$ is scalar by the character $\omega_\rho$. Thus, the supercuspidal part of $R\Gamma(G,b,\mu)[\rho]$ is a direct sum of supercuspidal representations of $G(\Qp)$ with central characters $\omega_\rho$. It is enough to show that $\pi$ has a nonzero coefficient in the Euler characteristic of $R\Gamma(G,b,\mu)[\rho]$, denoted by $\Mant_{b,\mu}(\rho)$. 
Then, we use the formula of $\Mant_{b,\mu}(\rho)$ obtained in \cite{HKW22}. 
Let $\delta_{\pi,\rho}$ be the algebraic representation of the $S$-group $S_\varphi$ attached to $\pi$ relative to $\rho$. Since the error term in the formula of \cite{HKW22} has no supercuspidal part (see \cite[Appendix C]{HKW22}), the coefficient of $\pi$ in $\Mant_{b,\mu}(\rho)$ is $\dim \Hom_{S_\varphi}(\delta_{\pi,\rho},\std)$ which is seen to be nonzero. Thus, we get the following. 
\begin{prop}
	If we assume \Cref{conj:compat}, it follows that for any supercuspidal representation $\pi$ of $\GSp_4(\Qp)$, $i^1_{!}\pi$ lies in $\cl{D}_\lis(\Bun_G,\Qla)^\Eis$ if and only if $\varphi_\pi^\FS$ is not supercuspidal, and in particular, the latter claim of \Cref{conj:cusp} holds for $G=\GSp_4$ over $\Qp$. 
\end{prop}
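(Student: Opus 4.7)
The plan is to establish the biconditional of the proposition by treating its two directions separately, and then to upgrade it to the latter claim of \Cref{conj:cusp} via the second adjointness of \Cref{thm:unEis} together with a symmetric argument for the other basic class.

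The direction ``$\varphi_\pi^{\FS}$ not supercuspidal implies $i^1_!\pi\in\cl{D}_\lis(\Bun_G,\Qla)^\Eis$'' is the nontrivial one. I would generalize the Saito--Kurokawa argument already sketched above to both non-supercuspidal types (local Saito--Kurokawa and local Howe--Piatetski-Shapiro), since these exhaust the non-supercuspidal $L$-parameters whose $L$-packets contain supercuspidal representations for $\GSp_4$. Concretely, I pick a non-supercuspidal discrete member $\rho$ of $\Pi_{\varphi}^J$ for the appropriate inner form $J$ of $G$; observe that $i^b_*\rho\in\cl{D}_\lis(\Bun_G,\Qla)^\Eis$ because $\rho$ has a resolution by properly parabolically induced representations and sheaves on non-basic strata belong to $\cl{D}_\lis(\Bun_G,\Qla)^\Eis$ automatically; apply \Cref{conj:compat} to conclude $T_{\std^\vee}i^b_*\rho\in\cl{D}_\lis(\Bun_G,\Qla)^\Eis$; and verify that $\pi$ appears as a direct summand of the supercuspidal part of $R\Gamma(G,b,\mu)[\rho]=i^{1*}T_{\std^\vee}i^b_*\rho$. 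The multiplicity question reduces through the Mantovan-type formula of \cite{HKW22} to $\dim\Hom_{S_\varphi}(\delta_{\pi,\rho},\std)\neq 0$, since the error term of that formula has no supercuspidal contribution by \cite[Appendix C]{HKW22}.

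The reverse direction ``$\varphi_\pi^{\FS}$ supercuspidal implies $i^1_!\pi\notin \cl{D}_\lis(\Bun_G,\Qla)^\Eis$'' is formal. By the proposition immediately preceding \Cref{conj:cusp}, every $\Eis_P$ with $P$ proper lands in $\cl{D}_\lis(\Bun_G,\Qla)_{[M]}$ for a proper Levi $M\subsetneq G$, so $\cl{D}_\lis(\Bun_G,\Qla)^\Eis$ sits inside the direct sum of cuspidal-support blocks indexed by proper Levis. Supercuspidality of $\varphi_\pi^{\FS}$ places $i^1_!\pi$ in the complementary block $\cl{D}_\lis(\Bun_G,\Qla)_{[G]}$, ruling out membership.

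To deduce the latter claim of \Cref{conj:cusp}, only the converse ``$\CT_P(i^b_!\pi)=0$ for every proper $P$ implies $\varphi_\pi^{\FS}$ is supercuspidal'' remains, the forward direction already being noted to follow from \Cref{thm:compat}. Running the symmetric version of the biconditional for both basic classes, the converse follows by a formal orthogonality argument: assuming the constant-term vanishing, \Cref{thm:unEis} gives
\[
\Hom\bigl(\Eis_{\ov{P}}(B),\,i^b_!\pi\bigr)\cong \Hom\bigl(B,\,\CT_P(i^b_!\pi)\bigr)=0
\]
for every $B\in\cl{D}_\lis(\Bun_M,\Qla)$ and every proper parabolic $P$, so $i^b_!\pi$ is right-orthogonal to the generators of $\cl{D}_\lis(\Bun_G,\Qla)^\Eis$; a nonzero object of the Eisenstein subcategory cannot be right-orthogonal to all its generators since its identity endomorphism would then vanish, so $i^b_!\pi\notin\cl{D}_\lis(\Bun_G,\Qla)^\Eis$, and the biconditional forces $\varphi_\pi^{\FS}$ to be supercuspidal. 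The hard step is the multiplicity nonvanishing in the first paragraph, which requires the explicit local Langlands correspondences of \cite{GT11} and \cite{GT14} together with the endoscopic character identities underlying \cite{HKW22} to analyze $\delta_{\pi,\rho}$ for both non-supercuspidal types uniformly.
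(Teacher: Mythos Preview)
Your first two paragraphs reproduce the paper's own discussion leading up to the proposition, so the biconditional at $b=1$ is handled identically. The difference lies in how the latter claim of \Cref{conj:cusp} is handled for the non-trivial inner twist $G_b=J$. The paper does \emph{not} re-run a symmetric HKW22 computation for $J$; instead it picks a Hecke operator $T_V$ with the appropriate central character so that $T_V i^b_!\pi$ is supported at $[1]\in B(G)$, observes (via \Cref{conj:compat}) that the result is still cuspidal with central character $\omega_\pi$, hence decomposes as a direct sum of supercuspidal representations of $\GSp_4(\Qp)$ each carrying the Fargues--Scholze parameter $\varphi^\FS_\pi$, and then invokes the biconditional already established at $b=1$ to conclude that $\varphi^\FS_\pi$ is supercuspidal. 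Your ``symmetric version'' would also work---one can exchange the roles of $G$ and $J$ via the duality of local Shimura data and rerun the Mantovan multiplicity argument---but it obliges you to redo the \cite{HKW22} computation on the $J$-side, whereas the paper's Hecke-translation trick reduces everything to the single computation already performed at $b=1$. The paper's route is shorter and needs the endoscopic input only once; your route is more symmetric in formulation but carries duplicated work.
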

\begin{proof}
	It is enough to show the second claim for inner twists $G_b(\Qp)$. Suppose that $i_!^b \pi$ is cuspidal. Then, for a suitable $V\in \Rep(\LG)$, $T_Vi_!^b\pi$ is supported at $[1]\in B(G)$ and its cohomology groups are of finite length (see \cite[Section IX.3]{FS21}). Since $T_Vi_!^b\pi$ is also cuspidal with a central character $\omega_\pi$ of $\pi$, it has no non-supercuspidal part and it is decomposed into a direct sum of supercuspidal representations. Each direct summand should be cuspidal with Fargues--Scholze parameter $\varphi^\FS_\pi$, so $\varphi^\FS_\pi$ should be supercuspidal. 
	Note that $T_Vi_!^b\pi$ is nonzero since $T_{V^\vee} T_Vi_!^b\pi$ contains $i_!^b\pi$ as a direct summand. 
\end{proof}

The above argument is similar to the one used to show the compatibility of Fargues--Scholze parameters with the classical local Langlands correspondence in \cite{Ham22b} and \cite{BHN22}. In particular, the same argument works for unramified unitary and unitary similitude groups in an odd number of variables. 

\bibliography{reference}
\bibliographystyle{alpha}
\end{document}